\newtheorem{theorem}{Theorem}
\newtheorem{lemma}[theorem]{Lemma}
\newtheorem{proposition}[theorem]{Proposition}
\newtheorem{corollary}[theorem]{Corollary}
\theoremstyle{definition}
\newtheorem{definition}[theorem]{Definition}
\newtheorem{notation}[theorem]{Notation}
\renewcommand\subsection{\@startsection {subsection}{1}{\z@}%
                                   {-1.5ex \@plus -1ex \@minus -.2ex}%
                                   {0.75ex \@plus.2ex}%
                                   {\normalfont\sc}}
\newcommand{\uu}{v}
\theoremstyle{remark}
\newtheorem{remark}[theorem]{Remark}
\def\myitemmarker{\P}
\newdimen\myindent
\newcommand{\ab}{\allowbreak}
\renewcommand{\thefootnote}{(\arabic{footnote})}
\newcommand{\thebottomline}{%
\renewcommand{\thefootnote}{}
\renewcommand{\footnoterule}{}
\phantom{M}\footnotetext{\hfill\tiny\textit{\noindent\romannumeral\day.\romannumeral\month.\romannumeral\year}}}
\newcommand{\cincludegraphics}[1]%
{\setbox1=\hbox{\includegraphics{#1}}%
$\vcenter{\hsize\wd1\box1}$}
\newcommand{\nc}{\mathit{nc}}
\newcommand\cA{{\mathcal A}}
\newcommand{\bC}{\mathbb{C}}
\newcommand{\bT}{\mathbb{T}}
\newcommand{\bZ}{\mathbb{Z}}
\newcommand{\diag}{\textrm{diag}}
\def\E{\mathrm{E}}
\def\Tr{\mathrm{Tr}}
\def\tr{\mathrm{tr}}
\newcommand{\Wg}{\operatorname{Wg}}
\def\cP{{\mathcal P}}
\let\phi=\varphi
\newcommand{\cat}{\mathit{Cat}}
\newcommand{\AN}{\mathcal{A}^{(p,q)}_{ \overrightarrow{\sigma}, \varepsilon, N }}
\newcommand{\tk}{\tilde\kappa}
\newcommand{\tphi}{\tilde\varphi}
\newcommand{\ith}{\mathit{th}}
\newcommand{\cD}{\mathcal{D}}
\renewcommand{\varepsilon}{\eta}
\newcommand{\VN}{\mathcal{V}_{\overrightarrow{\sigma}, \varepsilon, N }{(p,q)}}
\title[Freeness and the Transpose]{On the Partial Transpose
  \\[5pt] of a Haar Unitary Matrix}
\author[mingo]{James A. Mingo}
\address{Department of Mathematics and Statistics, Jeffery
  Hall, Queen's University, Kingston, Ontario, ONL K7L 3Z1,
  Canada}
\thanks{Research supported by a Discovery Grant from the
Natural Sciences and Engineering Research Council of Canada (JM); The Simons
  Foundation grant No. 360242 (MP); and  Narodowe Centrum Nauki grant 2016/23/D/ST1/01077 (KSz)}
\author[popa]{Mihai Popa}
\address{The University of Texas at San Antonio, Department
  of Mathematics, One UTSA Circle, San Antonio, Texas 78249,
  and \newline ${}\hspace{.35cm}{}$ Institute of Mathematics
  ``Simion Stoilow'' of the Romanian Academy, P.O. Box
  1-764, Bucharest, RO-70700, Romania}
\author[szpojankowski]{Kamil Szpojankowski}
\address{Wydzial Matehatyki i Nauk Informacyjnych,
  Politechnika Warszawska, ul. Koszykowa 75, 00-662
  Warszawa, Poland}
\begin{document}
\thispagestyle{empty}

\begin{abstract}
We consider the effect of a partial transpose on the limit $*$-distribution  of a Haar 
distributed random unitary matrix. If we fix the number of blocks, $b$, we show that the partial transpose can be decomposed along diagonals into a sum of $b$ matrices which are asymptotically free and identically  distributed. We then consider the joint effect of different block decompositions and show that under some mild assumptions we also get asymptotic freeness.
\end{abstract}

\maketitle

\section{Introduction}

Consider a sequence of random matrices $(U_N)_{N\geq 1}$,
where the matrix $U_N$ is an $N\times N$ Haar unitary
matrix. In our previous paper \cite{mpsz1} we studied
matrices obtained from Haar unitary matrices via a
permutation of the entries. More precisely for $\sigma$
being a permutation of the set $ \{ ( i, j): 1 \leq i, j
\leq N \}$, we considered a matrix $U_N$ defined as $[
  U_N^{\sigma}]_{i, j} = [ U_N ]_{ \sigma(i, j)} $. For a
sequence of matrices of growing sizes one considers a
sequence of permutations $(\sigma_N)_{N\geq 1}$, we
identified conditions which give that sequence
$U_N^{\sigma_N}$ is asymptotically circular and we also
found a conditions on pairs of sequences of permutations
$(\sigma_N)_{N\geq 1}$ and $(\sigma^\prime_N)_{N\geq 1}$
such that sequences $U_N^{\sigma_N}$ and
$U_N^{\sigma^\prime_N}$ are asymptotically free. Moreover we
showed that our conditions are satisfied with probability
one by sequences of random uniform permutations. Despite the
fact that conditions from \cite{mpsz1} cover many examples
of permutations, there are important examples of entry
permutations not covered by this result. One such example is
given by partial transposes, a significant class of entry
permutations of matrix entries which plays an important role
in quantum information theory. This paper is devoted to a
detailed study of asymptotic distribution of partial
transposes of Haar unitary matrices. 

\begin{notation} 

In this paper we shall consider the left and right partial
transpose of a block matrix. Let $A^T$ be the usual
transpose of a matrix $A$. If $N = b \times d$ we can write
$M_N(\bC) = M_b(\bC) \otimes M_d(\bC)$ and visualize this as
$M_b(M_d(\bC))$, i.e. $b \times b$ matrices with each entry
a $ d \times d$ matrix. As operators on the tensor product
we let $\Gamma = \mathit{id} \otimes T$ and $\Gamma^{(-1)} =
T \otimes \mathit{id}$. We shall write $\Gamma^{(1)}$ for
$\Gamma$ and usually also include the size of the blocks as
this determines the map. We call $\Gamma_{b, d}^{(-1)}$
the\textit{ left partial transpose} and $\Gamma_{b,d}^{(1)}$
the \textit{right partial transpose}.
	
If $ N $ is a positive integer we denote by $ [ N ] $ a
linearly ordered set with $ N $ elements. For simplicity, we
will identify $ [ N ]$ to the set $ \{1, 2, \dots, N
\}$. Suppose that $ N = b \cdot d $ and $ \vartheta \in \{
-1, 1 \} $. To bring our notation with entry permutations we
shall define the map $ \Gamma_{b, d}^{(\vartheta)} : [ N ]^2
\rightarrow [ N ]^2 $ as follows.  First, let $ \phi_{b, d}:
            [ N ]^2 \rightarrow ( [ b] \times [ d ])^2 $
            given by
\begin{align*}
		\phi_{b, d}( i, j) = ( a_1, a_2, a_{-1}, a_{-2})
\end{align*}
	whenever $ i = (a_1 -1) d + a_2 $ and $ j = (a_{-1} -1) d +
	a_{-2}.  $ The idea here is to have $(a_1, a_2, a_{-1},
	a_{-2})$ locate the $(a_2, a_{-2})$ entry of the $(a_1,
	a_{-1})$ block.  Then let $ \gamma_{b, d}^{(\vartheta)} : (
	[ b] \times [ d ])^2 \rightarrow ( [ b] \times [ d ])^2 $ be
	given by
	\[
	\gamma_{b, d}^{(\vartheta)} (a_1, a_2, a_{-1}, a_{-2} ) = ( a_{ \vartheta}, a_{-2\vartheta}, 
	a_{- \vartheta}, a_{2\vartheta}). 
	\]
	Thus $\gamma_{b,d}^{(1)}$ switches $a_2$ and $a_{-2}$,
	whereas $\gamma_{b,d}^{(-1)}$ switches $a_1$ and $a_{-1}$.
	Finally, put
	\[
	\Gamma_{b, d}^{ ( \vartheta)} = \phi_{b, d}^{ -1} 
	\circ \gamma_{b, d}^{(\vartheta)} \circ 
	\phi_{b, d}.
	\]
\end{notation}

In the case that $N=bd$ we can decompose the matrix $U_N$ as $b\times b$ block matrix with blocks of size $d\times d$ each. Hence we write $U_N=[U_{i,j}^{(d)}]_{1\leq i,j \leq b}$. It is natural then to consider the asymptotic join $\ast$--distribution of $\{U_{i,j}^{(d)}\}_{1\leq i,j \leq b}$ as $d\to\infty$, this question was settled in \cite{cu}, see discussion around equations \eqref{eq:matricial_cumulants_1}, \eqref{eq:matricial_cumulants_2} for details. Denote the resulting tuple of non--commutative random variables by $\{v_{i,j}\}_{1\leq i,j \leq b}$. In section 2 we present a parallel construction of a decomposition of a Haar unitary element $v$ and we explain that for $v_{i,j}$ as above the matrix $v=[v_{i,j}]_{1\leq i,j \leq b}$ where is indeed a Haar unitar and $v_{i,j}$ generate so called Brown algebra. Next we look at the transpose of $v$ that is $v^t=[v_{j,i}]_{1\leq i,j \leq b}$ and we present a decomposition of $v^t$ into a sum of $n$ operators which are free and $R$--diagonal, this allows us to find the distribution of $v^t$. Moreover we show that if we have $\mathcal{B}$ which is free from $\{v_{j,i}\}_{i,j}$ then $v$ is free from $M_n(\mathcal{B})$. In Section 3 we study asymptotic joint limiting distribution of different partial transposes of the same Haar unitary matrix. We find sufficient conditions under which partial transposes of the same Haar unitary matrix are asymptotically free.

\section{Limit distributions and freeness results \\[2pt]
in the Brown algebra}

\begin{definition}\label{def:brown_algebra}
The non-commutative unitary group $U_b^\nc$, also called the
\textit{Brown algebra}, is the universal $C^*$-algebra
generated by a unit and $b^2$ operators $\{ v_{ij}\}_{i,j =
  1}^b$ such that $ \sum_{k=1}^b v_{ik} v_{jk}^* =
\sum_{k=1}^b v_{ki}^* v_{kj} = \delta_{ij}$, i.e. that the
matrix $v = (\uu_{ij})_{i,j = 1}^d \in M_b(U_b^\nc)$ is
unitary.
\end{definition}

The algebra $U_b^\nc$ was constructed by Brown in \cite{b}
as the $(1, 1)$ corner algebra of the free product
$C^*$-algebra $M_b(\bC) * C(\bT)$ where $C(\bT)$ is the
$C^*$-algebra of continuous functions on the unit circle. We
can put a tracial state, $\phi_b$, on $M_b(\bC) * C(\bT)$
which is the free product of the normalized trace on
$M_b(\bC)$ and the state on $C(\bT)$ obtained from
integration with the normalized Haar measure on $\bT$ (see
McClanahan \cite[\S 3]{mc}). By virtue of the free product
construction we have that $M_b(\bC)$ and $C(\bT)$ are free
with respect to $\phi_b$ in $M_b(\bC) * C(\bT)$, (see
\cite[Prop. 1.5.5]{vdn}). Thus in $M_b(\bC) * C(\bT)$ we
have a Haar unitary $v \in C(\bT)$ (given by $v(z) = z$ for $z \in \bT$), which is $*$-free from the matrix
units $\{e_{ij}\}_{i,j = 1}^b$. We let $U_b^\nc =
e_{11}(M_b(\bC) * C(\bT))e_{11}$ and $v_{ij} = e_{1i} v
e_{j1} \in U_b^\nc$. Then $\{ v_{ij}\}_{i,j = 1}^b$ generate
$U_b^\nc$ and $M_b(U_b^\nc)$ can be identified with
$M_b(\bC) * C(\bT)$ (see
McClanahan \cite[Prop. 2.2]{mc}). We let $\phi: U_b^\nc \rightarrow \bC$
be given by $\phi(e_{11} f e_{11}) = b \phi_b(e_{11} f
e_{11})$ for $f \in C(\bT)$. Thus $\phi$ is a trace on
$U_b^\nc$.

The free $*$-cumulants of the Haar unitary $v$ are described
by saying that $v$ is $R$-diagonal and the non-vanishing
ones are given by the signed Catalan numbers $\beta_r =
(-1)^{r-1} \cat_{r-1}$ where $\cat_b$ is the $b^\ith$
Catalan number, see \cite[Cor. 15.1]{ns}. Using the fact
that the matrix units $\{ e_{ij} \}_{i,j = 1}^b$ are
$*$-free we may use the free compression result of Nica and
Speicher, \cite[Thm. 14.18]{ns}, to conclude that the free
$*$-cumulants (relative to $\phi$) of $\{v_{ij}\}_{i,j =
  1}^b$ are given by
\begin{gather}\label{eq:matricial_cumulants_1}
\kappa_{2r} \big(
v_{i_1, j_1},
v_{i_2, j_1}^\ast, 
v_{i_2, j_2},
v_{i_3, j_2}^\ast,
\dots,
v_{i_r, j_r}, 
v_{i_1, j_r}^\ast
\big)  = 
b^{1-2r}\beta_r\\
\label{eq:matricial_cumulants_2}
\kappa_{2r} \big(
v_{i_1, j_1}^\ast,
v_{i_1, j_2},
v_{i_2, j_2}^\ast,
v_{i_2, j_3}
\dots,
v_{i_r, j_r}^\ast,
v_{i_r, j_1}
\big)  = 
b^{1-2r}\beta_r 
\end{gather}
and all other free cumulants are zero. Since $\phi$ is
tracial each of (\ref{eq:matricial_cumulants_1}) and (\ref{eq:matricial_cumulants_2}) implies
the other (see \cite[Def. 15.3]{ns}).

Let $U_N$ be a $N \times N$ Haar distributed random unitary
matrix. It was shown by Voiculescu that $U_N$ is
asymptotically $*$-free from constant matrices (see
\cite[Thm. 23.14]{ns} for a proof using the Weingarten
calculus).  Suppose that $ N = bd $, then we can see $ U_N $
as a $ N \times N $ block matrix with block entries $ U_{i,
  j}$ for $ 1\leq i, j \leq n $. Then, for fixed $b$, the
joint distribution of the block matrices $\{ U_{ij}
\}_{i,j=1}^n$ converges (as $d \rightarrow \infty$) to the
joint distribution of $\{ v_{ij} \}_{i,j=1}^b$ which is
given by (\ref{eq:matricial_cumulants_1}) and (\ref{eq:matricial_cumulants_2}). This was already
observed by C\'ebron and Ulrich in \cite[Cor. 2.8 and
  Thm. 3.3]{cu}.
  
\subsection{Diagonal Decomposition}
Given a unital $*$-algebra $\cA$ and a matrix $a = (a_{ij})_{i,j = 1
}^b \in M_b(\cA)$ we let
\[
a^t
  =\left[ \begin{matrix} 
           a_{11} & a_{21} & \cdots & a_{b1} \\
           a_{12} & a_{22} & \cdots & a_{b2} \\
           \vdots & \vdots &        & \vdots \\
           a_{1b} & a_{2b}  & \cdots & a_{bb}
           \end{matrix}\right].
\] 
We call $a^t$ the \textit{transpose} of $a$. Since the
entries come from a non-commutative algebra we no longer
expect to have $(ab)^t = b^t a^t$. In this sub-section we
shall show how to decompose $v^t$ into $b$ pieces each
$*$-free from each other. Let
\[
           s = 
\left[\begin{matrix}
0      &    1   &    0   & \cdots &   0    \\
0      &    0   &    1   & \cdots &   0    \\
\vdots & \vdots & \vdots & \ddots & \vdots \\
\vdots & \vdots & \vdots &        &   1    \\
1      &    0   & \vdots & \cdots &   0  
\end{matrix}\right].
\]
Then $s = (s_{ij})_{ij}$ where $s_{ij} = \delta_{i+1, j}$
(all indices mod $n$), then and $s^i = 1$ for $i \equiv 0$
$\pmod b$. If $\phi: \cA \rightarrow \bC$ is a linear map
with $\phi(1) = 1$, we let $\Phi : M_b(\cA) \rightarrow
\bC$ given by $\Phi( a ) =\phi (a_{11} + \cdots +
a_{nn})/n$. If $I_b$ denotes the identity matrix of
$M_b(\cA)$ then $\Phi(I_b) = 1$. If $\phi$ is a trace on
$\cA$ then $\Phi$ is a trace on $M_b(\cA)$ and $\phi(s^i)
= 0$ for $i \not \equiv 0$.

\subsection{The $*$-distribution of $v^t$}

Now let $\cA = U_b^\nc$ and let us return to our Haar
unitary $v \in M_b(U_b^\nc)$.

\begin{definition}
\label{def:diagonal_decomposition}
Let $w_{1,0} = \diag( v_{11},\ab \dots,\ab v_{bb})$ be the
$b \times b$ diagonal matrix with diagonal entries $ v_{11},
\dots, v_{bb}$. For $1 \leq k \leq b-1$ let $w_{1, k}$ be
the diagonal matrix $\diag( v_{k+1,1}, v_{k+2,2}, \ab\dots,
v_{k,b} )$. Then
\[
v^t = w_{1, 0} s^0 + w_{1,1} s^1 + \cdots + w_{1, b-1}
s^{b-1}
\]
is the \textit{diagonal decomposition} of $v^t$. For $0 \leq
k \leq b-1$, let $v_k = w_{1,k} s^k$.
\end{definition}

Then $v^t = v_0 + \cdots + v_{b-1}$ and we shall show in
Theorem \ref{thm:freeness_over_C} that the family $\{ v_0,
v_1, \dots, v_{b-1}\}$ is $*$-free. When $b = 3$ our
decomposition looks like
\[
v_0 = \left[\begin{matrix} 
v_{11} & 0 & 0 \\ 0 & v_{22} & 0 \\ 0 & 0 & v_{33}\\
\end{matrix}\right], 
v_1 = \left[\begin{matrix} 
0 & v_{21} & 0 \\ 0 & 0 & v_{32} \\ v_{13} & 0 & 0\\
\end{matrix}\right], 
v_2 = \left[\begin{matrix} 
0 & 0 & v_{31} \\ v_{12} & 0 & 0 \\ 0 & v_{23} & 0\\
\end{matrix}\right].
\]
To demonstrate $*$-freeness we shall restate the results in
Eq's (\ref{eq:matricial_cumulants_1}) and
(\ref{eq:matricial_cumulants_2}) using the symmetric group.
  
For a positive integer $m$ let $[m] = \{1, 2, \dots m\}$ and
$S_m$ denote the permutation group of $[m]$. Moreover we let
$[\pm m] = \{1, -1, 2, -2, \dots, m, \ab -m\}$ and $S_{\pm
  m}$ denote the permutation group of $[\pm m]$. We shall
regard $S_m$ as the subgroup of $S_{\pm m}$ of permutations
acting trivially on $\{-1, -2, \dots, -m\}$. Thus for $\pi
\in S_m$ and $k \in [m]$ we have $\pi(-k) = -k$. Let $\delta
\in S_{\pm m}$ be the permutation with cycle decomposition
$(1,-1)(2, \ab-2)\cdots (m, -m)$. Let $\gamma$ be the
permutation in $S_m$ with cycle decomposition $(1, 2, 3,
\dots, m)$. Following our convention, $\gamma \delta
\gamma^{-1}$ has the cycle decomposition $(1, -m)(-1, 2),
(-2, 3) \cdots (-(m-1), m)$. Given $\epsilon = (\epsilon_1,
\dots, \epsilon_m) \in \bZ_2^m$ with $\bZ_2 = \{-1, 1\}$, we
consider $\epsilon$ to also be a permutation in $S_{[\pm
    m]}$ by setting $\epsilon(k) = \epsilon_{|k|} k$ for $k
\in [\pm m]$. See Remark \ref{rem:non_vanishing_cumulant}
for some illustrations of our notation. For a sequence
$i_{\pm 1}, i_{\pm 2}, \dots, i_{\pm m} \in [n]$ we let
$\ker(i)$ be the partition of $[\pm m]$ such that $i_r =
i_s$ if and only if $r$ and $s$ are in the same block of
$\ker(i)$. Since we must deal with $*$-moments we need a way
to record all possible mixed $*$-moments of the $u_{ij}$. To
this end we will let $a^{(1)} = a$ and $a^{(-1)} = a^*$ for
any element, $a$, of a $*$-algebra. The restatement of
Eq. (\ref{eq:matricial_cumulants_1}) and
Eq. (\ref{eq:matricial_cumulants_2}) now becomes
Eq. (\ref{eq:entrywise_cumulants}) below.
  
\begin{lemma}\label{lemma:cumulant_value}
Let $\epsilon_1, \dots, \epsilon_m \in \bZ = \{-1, 1\}$ and
$i_{\pm 1}, \dots, i_{\pm m} \in [b]$. Then
\begin{equation}\label{eq:entrywise_cumulants}
\kappa_m \big( v_{i_1i_{-1}}^{(\epsilon_1)}, \dots,
v_{i_mi_{-m}}^{(\epsilon_m)}\big) = 0
\end{equation}
unless: $(i)$ $m$ is even; $(ii)$ $\epsilon_k +
\epsilon_{k+1} = 0$ for $1 \leq k \leq m-1$; $(iii)$
$\ker(i) \geq \epsilon \gamma \delta
\gamma^{-1}\epsilon$. When these conditions are satisfied
the cumulant is $b^{1-m} \beta_{m/2}$.
\end{lemma}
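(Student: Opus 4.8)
The plan is to read the Lemma as a change of language: conditions (i)--(iii) and the stated value are nothing more than Equations~(\ref{eq:matricial_cumulants_1}) and (\ref{eq:matricial_cumulants_2}), together with the vanishing of all other cumulants, rewritten in terms of $\gamma$, $\delta$ and $\epsilon$. I would dispose of (i) and (ii) first: both non-vanishing patterns in (\ref{eq:matricial_cumulants_1}) and (\ref{eq:matricial_cumulants_2}) have even length and strictly alternating adjacent $*$'s, so any cumulant whose arguments fail to have $m$ even or $\epsilon_k+\epsilon_{k+1}=0$ is among the ``all other'' cumulants and hence vanishes. This reduces everything to the two alternating sign vectors, $\epsilon=(+,-,+,-,\dots)$ and $\epsilon=(-,+,-,+,\dots)$.

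The heart of the argument is to identify the pairing $\epsilon\gamma\delta\gamma^{-1}\epsilon$ explicitly. Since $\epsilon$ is an involution, this is the conjugate of $\gamma\delta\gamma^{-1}$ by $\epsilon$, hence again a fixed-point-free involution of $[\pm m]$ (a pairing with $m$ two-element blocks), and $\ker(i)\geq\epsilon\gamma\delta\gamma^{-1}\epsilon$ just says that $i_r=i_s$ whenever $\{r,s\}$ is one of these pairs. I would take the given cycle decomposition $\gamma\delta\gamma^{-1}=(1,-m)(-1,2)(-2,3)\cdots(-(m-1),m)$ and conjugate cycle by cycle, tracking which endpoints $\epsilon$ flips in sign. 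For $\epsilon=(+,-,+,\dots)$ this collapses the pairs into the positive blocks $\{1,2r\},\{2,3\},\{4,5\},\dots,\{2r-2,2r-1\}$ and the negative blocks $\{-1,-2\},\{-3,-4\},\dots,\{-(2r-1),-2r\}$; the other sign vector produces the analogous pairing with the roles of the positive and negative indices (rows and columns) interchanged.

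It then remains to match these pairings against the index constraints imposed by the two explicit cumulant formulas. For $\epsilon=(+,-,+,\dots)$ I would line up $\kappa_m(v_{i_1i_{-1}},v_{i_2i_{-2}}^*,v_{i_3i_{-3}},\dots)$ with (\ref{eq:matricial_cumulants_1}): matching the two argument lists forces the column labels to agree in consecutive pairs, $i_{-(2k-1)}=i_{-2k}$, and the row labels to agree across pairs with the cyclic wrap $i_{2r}=i_1$, which are exactly the negative and positive blocks found above. Thus $\ker(i)\geq\epsilon\gamma\delta\gamma^{-1}\epsilon$ is precisely the condition that the arguments reproduce a non-vanishing term of (\ref{eq:matricial_cumulants_1}), whose value is $b^{1-2r}\beta_r=b^{1-m}\beta_{m/2}$. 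The case $\epsilon=(-,+,-,\dots)$ is handled identically against (\ref{eq:matricial_cumulants_2}) (alternatively by traciality and applying $*$ to all entries). I expect the only real friction to be the parity bookkeeping in the conjugation step, in particular keeping the cyclic pair $\{1,2r\}$ and the sign flips on even- versus odd-indexed endpoints straight; once the two pairings are computed the identification with (\ref{eq:matricial_cumulants_1})--(\ref{eq:matricial_cumulants_2}) is immediate.
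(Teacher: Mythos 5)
Your proposal is correct and matches the paper's treatment: the paper offers no formal proof, simply asserting that the lemma is a restatement of Equations (\ref{eq:matricial_cumulants_1}) and (\ref{eq:matricial_cumulants_2}), and then Remark \ref{rem:non_vanishing_cumulant} unpacks the two conjugated pairings $\epsilon\gamma\delta\gamma^{-1}\epsilon$ exactly as you do (your computation even corrects the small typo there, since the last cycle should be $(-(m-1),-m)$). Your cycle-by-cycle conjugation and the matching of the resulting row/column identifications against the two explicit cumulant formulas is precisely the intended argument, carried out in full.
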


\begin{figure}[t]
\begin{center}
\begin{tikzpicture}
\node  at (1,1) {1};
\node  at (2,1) {$-1$};
\node  at (3,1) {2};
\node  at (4,1) {$-2$};
\node  at (5,1) {3};
\node  at (6,1) {$-3$};
\node  at (7,1) {4};
\node  at (8,1) {$-4$};
\node  at (9,1) {$\cdots$};
\node  at (10,1) {$m$};
\node  at (11, 1) {$-m$};
\draw  (1, 0.75) -- (1, 0) -- (10,0) -- (10, 0.75);
\draw (2,0.75) -- (2,0.5) -- (4,0.5) -- (4, 0.75);
\draw (3, 0.75) -- (3, 0.25) -- (5, 0.25) -- (5, 0.75);
\draw (6,0.75) -- (6,0.5) -- (8,0.5) -- (8,0.75);
\draw (7,0.75) -- (7,0.25) -- (8.25,0.25);
\draw (9.5,0.5) -- (11,0.5) -- (11, 0.75); 
\end{tikzpicture}
\end{center}
\caption{\small One of the two possible pairings that gives
  a non-crossing cumulant in Remark
  \ref{rem:non_vanishing_cumulant}. Shown is $(1,
  m)(-1,-2)\ab(2, 3)(-3, -4) \cdots \ab (-(m-1),
  m)$.\label{fig:first_pairing}}
\end{figure}

\begin{remark}\label{rem:non_vanishing_cumulant}
As there are only two possible $\epsilon$'s which produce a
non-zero cumulant there are only two possible values for
$\epsilon \gamma \delta \gamma^{-1}\epsilon$. When $\epsilon
= (1, -1, \dots, 1, -1)$ we have $\epsilon \gamma \delta
\gamma^{-1}\epsilon = (1, m)(-1,-2)(2, 3)(-3, -4) \cdots \ab
(-(m-1), m)$. When $\epsilon = (-1, 1, \dots, -1, 1)$ we
have $\epsilon \gamma \delta \gamma^{-1}\epsilon = (-1,\ab
-m) (1,2) (-2, \ab-3) \ab (3, 4) \cdots \ab (-(m-2), -(m-1))
\ab (m-1, m)$. See Figures \ref{fig:first_pairing} and
\ref{fig:second_pairing} for illustrations. The condition in
Lemma \ref{lemma:cumulant_value} becomes either, 
\[
i_1 = i_m, i_{-1} = i_{-2}, i_2 = i_3, i_{-3} = i_{-4}, \dots, 
i_{-m} = i_{-(m-1)}
\]
or
\[
i_1 = i_2, i_{-1} = i_{-m}, i_{-2} = i_{-3}, i_{3} = i_{4}, \dots, 
i_{m} = i_{m-1}.
\]
\end{remark}

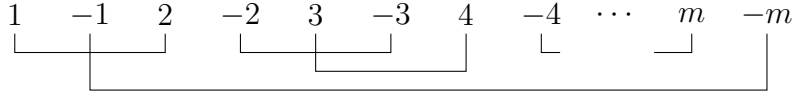
\begin{figure}[t]
\begin{center}
\begin{tikzpicture}
\node  at (1,1) {1};
\node  at (2,1) {$-1$};
\node  at (3,1) {2};
\node  at (4,1) {$-2$};
\node  at (5,1) {3};
\node  at (6,1) {$-3$};
\node  at (7,1) {4};
\node  at (8,1) {$-4$};
\node  at (9,1) {$\cdots$};
\node  at (10,1) {$m$};
\node  at (11, 1) {$-m$};
\draw  (1, 0.75) -- (1, 0.5) -- (3,0.5) -- (3, 0.75);
\draw (2,0.75) -- (2,0) -- (11,0) -- (11, 0.75);
\draw (4, 0.75) -- (4, 0.5) -- (6, 0.5) -- (6, 0.75);
\draw (5,0.75) -- (5,0.25) -- (7,0.25) -- (7,0.75);
\draw (8,0.75) -- (8,0.5) -- (8.25,0.5);
\draw (9.5,0.5) -- (10,0.5) -- (10, 0.75); 
\end{tikzpicture}
\end{center}
\caption{\small One of the two possible pairings that gives
  a non-crossing cumulant in Remark
  \ref{rem:non_vanishing_cumulant}. Shown is $(-1, -m)
  (1,2)\ab (-2, \ab-3) \ab (3, 4) \cdots \ab (-(m-2),
  -(m-1)) \ab (m-1, m)$.\label{fig:second_pairing}}
\end{figure}

\begin{notation}
For a matrix $a = (a_{ij})_{i,j = 1}^b \in M_b(\cA)$, let
$a^{[1]}_{ij} = a_{ij}$ and $a^{[-1]}_{ij} = a_{ji}^*$. With
this notation $a^{[-1]}_{ij}$ is the $(i,j)$-entry of
$a^*$. Note that with our other notation we have
$a^{(-1)}_{ij} = a_{ij}^*$.
\end{notation}

\begin{remark}\label{rem:non_vanishing_cumulant_bis}
With the notation above we have that equation
(\ref{eq:entrywise_cumulants}) becomes
\begin{equation}\label{eq:entrywise_cumulants_revised}
\kappa_m \big( v_{i_1i_{-1}}^{[\epsilon_1]}, \dots,
v_{i_mi_{-m}}^{[\epsilon_m]}\big) = 0
\end{equation}
unless: $(i)$ $m$ is even; $(ii)$ $\epsilon_k =
-\epsilon_{k+1}$ for $1 \leq k < m$; $(iii)$ $\ker(i) \geq
\gamma \delta \gamma^{-1}$, i.e.
\[
i_1 = i_{-m}, i_{2} = i_{-1}, i_{3} = i_{-2}, \dots,
i_{m} = i_{-(m-1)}.
\]
Notice that a matrix $a =
(a_{ij})_{ij}$ is $R$-cyclic exactly when all cumulants
$\kappa_b(a_{i_1i_{-1}}, a_{i_2i_{-2}}, \dots,
a_{i_bi_{-b}})$ vanish except possibly when $(i)$ $b$ is
even and $(ii)$ $\ker(i) \geq \gamma \delta \gamma^{-1}$. So
condition (\ref{eq:entrywise_cumulants_revised}) is a combination of
$R$-cyclicity and $R$-diagonality; which might be called
$R^{\,*}\!$-\textit{cyclicity}.
\end{remark}

\begin{notation}
Recall that $w_{1,k} = \diag( v_{k+1,1}, v_{k+2,2}, \dots,
v_{k,n} )$. We shall interpret the indices of $v$ modulo
$n$; so that when $b = 5$, $v_{-2,4} = v_{3,4}$.  With this
convention we let
\[
w_{1,k,j} = s^j w_{1, k} s^{-j} = \diag( v_{k+j +1,j+ 1},
v_{k+j+2,j+2}, \dots, v_{k+j,j} ).
\]
Then we have $w_{1,k,j}^* = \diag(v_{k+j +1,j+ 1}^*,
v_{k+j+2,j+2}^*, \dots, v_{k+j,j}^*)$. For $k \geq 0$, let
\[
w_{-1,k} = \diag\Big( v_{b-k +1, 1}^{[-1]},
v_{b-k+2,2}^{[-1]}, \dots, v_{b-k,b}^{[-1]} \Big)
\]
\[
= \diag( v_{1, b-k +1}^{*}, v_{2, b-k+2}^{*}, \dots, v_{b,
  b-k}^{*}) = s^{-k} w_{1, k}^* s^k = w^*_{1,k,-k}.
\]
For example when $b=5$ and $k = 2$ we have
\[
w_{1, 2} = \diag( v_{31}, v_{42}, v_{53}, v_{14}, u_{25})
\]
and
\[
w_{-1,2} = \diag( v_{41}^{[-1]}, v_{52}^{[-1]},
v_{13}^{[-1]}, v_{24}^{[-1]}, v_{35}^{[-1]})
\]
\[
= \diag( v_{14}^*, v_{25}^*, v_{31}^*, v_{42}^*, v_{53}^*) =
s^{-2} w_{1,2}^* s^2.
\]
The idea is that putting the minus sign in $w_{-1,k}$ suggests 
replacing $k$ by $-k$. When $k = 0$ we have
\[
w_{1,0} = \diag(v_{11}, \dots, v_{nn}) \mathrm{\ and\ }
w_{-1,0} = w_{1,0}^*.
\]
Thus for $\epsilon \in \{-1, 1\}$ we can rewrite our
definition of $w$ for $k \geq 0$ as
\[
w_{\epsilon, k} = \diag( v^{[\epsilon]}_{\epsilon \cdot k+1,
  1}, \dots, v^{[\epsilon]}_{\epsilon \cdot k,b}).
\]
As before we let $w_{-1,k,j} = s^j w_{-1, k} s^{-j}$. Then
for $k \geq 0$
\[
w_{-1, k,j} = \diag( v_{k+j +1,j+ 1}^{[-1]},
v_{k+j+2,j+2}^{[-1]}, \dots, v_{k+j,j}^{[-1]} )
\]
\[
=\diag\Big(v_{j+1, k+j+1}^*, v_{j+2, j+k + 2}^*, \dots,
v_{j, k+j}^* \Big).
\]
\end{notation}
Then for $\epsilon \in \{-1, 1\}$, $k \geq 0$, and $j \in
\bZ$ we have
\[
w_{\epsilon, k, j} = \diag( v^{[\epsilon]}_{\epsilon \cdot k
  + j + 1, j + 1}, v^{[\epsilon]}_{\epsilon \cdot k + j + 2,
  j + 2}, \cdots, v^{[\epsilon]}_{\epsilon \cdot k + j, j}).
\]

Recall that $v_k = w_k s^k$, for $k > 0$. Thus

\[
v_k^* = s^{-k} w_{1,k}^* = s^{-k} w_{1,k}^* s^k s^{-k}
= w_{-1, k} s^{-k}. 
\]
Hence $v_k^{(\epsilon)} = w_{\epsilon,k} s^{\epsilon \cdot
  k}$. The $m^{\ith}$ entry of $w_{\epsilon,k,j}$ is
$v^{[\epsilon]}_{\epsilon \cdot k + j + m, j + m}$.

\begin{notation}
Let $\cD \subseteq M_b(U_b^\nc)$ denote the subalgebra of $b
\times b$ diagonal scalar matrices. We let $\tilde \phi$
denote the conditional expectation from $M_b(U_b^\nc)$ to
$\cD$ given by
\[
\tilde\phi( (a_{ij})_{ij}) = \diag( \phi(a_{11}), \dots, \phi(a_{bb})). 
\]
We let $\tilde \kappa_m$ denote the $m^\ith$ $\cD$-valued
cumulant. Thus for $a_1, \dots, a_m \ab\in M_b(U_b^\nc)$ we
have
\[
\tilde\kappa_m(a_1, \dots, a_m) = \sum_{\pi \in NC(m)} 
\mu(\pi, 1_m) \tilde\phi_\pi(a_1, \dots, a_m).
\]
If $a_1, \dots, a_m$ are diagonal matrices in
$M_b(U_b^\nc)$, as are our matrices $w_{\epsilon, k, j}$,
then the $\cD$-valued cumulants can just be computed entry
wise.
\end{notation}

\begin{proposition}\label{prop:vanishing_cumulant}
The $l^\ith$ entry of the diagonal matrix 
\[
\tilde \kappa_m( w_{\epsilon_1, i_1}, w_{\epsilon_2, i_2, \epsilon_1 i_1}, 
\cdots,
w_{\epsilon_m, i_m, \epsilon_1 i_1 + \cdots + \epsilon_{m-1} i_{m-1}})
\]
is
\[
\kappa_m( v^{[\epsilon_1]}_{j_1 + l, l}, v^{[\epsilon_2]}_{j_1 + j_2 + l, j_1 + l}, \dots, 
v^{[\epsilon_m]}_{j_1 + \cdots + j_m + l, j_1 + \cdots + j_{m-1} + l})
\]
where $j_k = \epsilon_k i_k$. 
\end{proposition}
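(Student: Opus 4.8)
The plan is to reduce the $\cD$-valued cumulant to a scalar $\phi$-cumulant evaluated entry-wise, and then simply to read off the diagonal entries of the $w$-matrices from the formula already established. Every argument is a diagonal matrix, and $\tilde\phi$ acts on a diagonal matrix by applying $\phi$ to each diagonal entry. Since $\cD$ is commutative and products of diagonal matrices are formed coordinate-wise, the partitioned expectation $\tilde\phi_\pi$ factorizes over the diagonal: the $(l,l)$ entry of $\tilde\phi_\pi(a_1, \dots, a_m)$ is $\phi_\pi\big((a_1)_{ll}, \dots, (a_m)_{ll}\big)$. Applying the Möbius inversion $\tilde\kappa_m = \sum_{\pi \in NC(m)} \mu(\pi, 1_m)\, \tilde\phi_\pi$ term by term, the $(l,l)$ entry of $\tilde\kappa_m(a_1, \dots, a_m)$ is therefore $\kappa_m\big((a_1)_{ll}, \dots, (a_m)_{ll}\big)$, the scalar $\phi$-cumulant of the $l^{\ith}$ diagonal entries. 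This is exactly the entry-wise principle recorded in the Notation preceding the statement, so I would begin by invoking it.

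With the entry-wise reduction in hand, the remaining task is index bookkeeping. Recall that the $l^{\ith}$ diagonal entry of $w_{\epsilon, p, j}$ is $v^{[\epsilon]}_{\epsilon p + j + l,\, j + l}$. The $k^{\ith}$ argument in the statement is $w_{\epsilon_k, i_k, J_{k-1}}$, where $J_0 = 0$ and $J_{k-1} = \epsilon_1 i_1 + \cdots + \epsilon_{k-1} i_{k-1}$ for $k \geq 2$. Substituting $\epsilon = \epsilon_k$, the middle parameter $p = i_k$, and $j = J_{k-1}$, its $l^{\ith}$ entry is $v^{[\epsilon_k]}_{\epsilon_k i_k + J_{k-1} + l,\; J_{k-1} + l}$.

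Finally I would set $j_r = \epsilon_r i_r$ and telescope the accumulated shift. Then $J_{k-1} = j_1 + \cdots + j_{k-1}$ and $\epsilon_k i_k = j_k$, so the row index collapses to $(j_1 + \cdots + j_k) + l$ and the column index to $(j_1 + \cdots + j_{k-1}) + l$. Feeding these into the entry-wise cumulant from the first step gives exactly
\[
\kappa_m\big( v^{[\epsilon_1]}_{j_1 + l,\, l},\ v^{[\epsilon_2]}_{j_1 + j_2 + l,\, j_1 + l},\ \dots,\ v^{[\epsilon_m]}_{j_1 + \cdots + j_m + l,\, j_1 + \cdots + j_{m-1} + l}\big),
\]
the asserted value of the $l^{\ith}$ entry. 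The only point that demands care — and the nearest thing to an obstacle — is tracking the accumulating shift $J_{k-1}$ correctly through the conjugations $w_{\epsilon,p,j} = s^j w_{\epsilon,p} s^{-j}$ and keeping all indices reduced modulo $b$; once this is organized the identity is immediate.
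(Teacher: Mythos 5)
Your proposal is correct and follows essentially the same route as the paper: reduce to the entry-wise computation of $\cD$-valued cumulants for diagonal matrices (the principle recorded in the Notation preceding the statement), then read off the $l^{\ith}$ entry of $w_{\epsilon_k, i_k, J_{k-1}}$ as $v^{[\epsilon_k]}_{\epsilon_k i_k + J_{k-1} + l,\, J_{k-1} + l}$ and telescope with $j_r = \epsilon_r i_r$. The paper's own proof is a one-line appeal to exactly these two facts; your version merely spells out the justification of the entry-wise factorization of $\tilde\phi_\pi$, which is a welcome addition but not a different argument.
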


\begin{proof}
This follows the previous observation that the $m^{\ith}$
entry of $w_{\epsilon,k,j}$ is $v^{[\epsilon]}_{\epsilon
  \cdot k + j + m, j + m}$.
\end{proof}

Recall that we shall write $i \equiv j$ to mean equivalence
modulo $b$.
\begin{corollary}\label{cor:vanishing_cumulants}
\[
\tilde \kappa_m( w_{\epsilon_1, i_1}, w_{\epsilon_2, i_2,
  \epsilon_1 i_1}, \cdots, w_{\epsilon_m, i_m, \epsilon_1
  i_1 + \cdots + \epsilon_{m-1} i_{m-1}}) \not= 0
\]
only if: $(i)$ $m$ is even; $(ii)$ $\epsilon_k i_k +
\epsilon_{k+1} i_{k+1} \equiv 0$, for $1 \leq k < m$;
$(iii)$ $i_k \equiv i_{k+1}$ for $1 \leq k < m$.
\end{corollary}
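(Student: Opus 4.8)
The plan is to combine Proposition~\ref{prop:vanishing_cumulant} with the vanishing criterion of Lemma~\ref{lemma:cumulant_value}, using the cleaner form recorded in Remark~\ref{rem:non_vanishing_cumulant_bis}, and then to translate the index conditions produced there back into the stated conditions on the $\epsilon_k$ and $i_k$. First I would set $j_k = \epsilon_k i_k$ and introduce the partial sums $r_0 = l$ and $r_k = j_1 + \cdots + j_k + l$, so that Proposition~\ref{prop:vanishing_cumulant} writes the $l^{\ith}$ entry of the $\cD$-valued cumulant as
\[
\kappa_m\big( v^{[\epsilon_1]}_{r_1, r_0},\, v^{[\epsilon_2]}_{r_2, r_1},\, \dots,\, v^{[\epsilon_m]}_{r_m, r_{m-1}} \big).
\]
The structural point is that this telescopes: the second (column) index of the $k^{\ith}$ entry is exactly the first (row) index of the $(k-1)^{\ith}$ entry. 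Thus in the notation of Remark~\ref{rem:non_vanishing_cumulant_bis} the scalar indices are $i_k = r_k$ and $i_{-k} = r_{k-1}$.

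Next I would read off the three conditions of Remark~\ref{rem:non_vanishing_cumulant_bis}. Condition (i) gives at once that $m$ is even, which is conclusion (i). Condition (iii), namely $\ker(i) \geq \gamma\delta\gamma^{-1}$, pairs $i_k$ with $i_{-(k-1)}$ for $2 \leq k \leq m$ and pairs $i_1$ with $i_{-m}$; under $i_k = r_k$, $i_{-k} = r_{k-1}$ the first family of equalities becomes $r_k = r_{k-2}$ for $2 \leq k \leq m$. Since $r_k - r_{k-2} = j_{k-1} + j_k$, each of these (modulo $b$) is precisely $j_{k-1} + j_k \equiv 0$, that is $\epsilon_{k-1} i_{k-1} + \epsilon_k i_k \equiv 0$; after a shift of index this is conclusion (ii) for $1 \leq k < m$. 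The leftover pair $i_1 = i_{-m}$ gives $r_1 = r_{m-1}$, which is automatic once $m$ is even, since the odd-indexed partial sums $r_1, r_3, \dots, r_{m-1}$ are already forced to agree.

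It remains to produce conclusion (iii), and for this I would use condition (ii) of Remark~\ref{rem:non_vanishing_cumulant_bis}, that $\epsilon_k = -\epsilon_{k+1}$. Substituting $\epsilon_{k+1} = -\epsilon_k$ into the relation $\epsilon_k i_k + \epsilon_{k+1} i_{k+1} \equiv 0$ just obtained yields $\epsilon_k(i_k - i_{k+1}) \equiv 0$, and since $\epsilon_k \in \{-1, 1\}$ is invertible modulo $b$ this forces $i_k \equiv i_{k+1}$. The one place demanding care is the bookkeeping between the two layers of indices --- the partial sums $r_k$ coming from the diagonal decomposition against the signed indices $i_{\pm k}$ of the cumulant --- and in particular checking that the wrap-around pair $\{1, -m\}$ contributes no constraint beyond those already listed; with that verified the remaining steps are direct substitutions.
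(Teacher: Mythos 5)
Your proposal is correct and follows essentially the same route as the paper: reduce to an entrywise cumulant via Proposition~\ref{prop:vanishing_cumulant}, invoke the vanishing criterion of Lemma~\ref{lemma:cumulant_value} (in the $[\epsilon]$-form of Remark~\ref{rem:non_vanishing_cumulant_bis}), and translate the resulting index constraints into conditions (i)--(iii). Your write-up is merely more explicit than the paper's, in particular in verifying that the wrap-around pair $\{1,-m\}$ imposes no new constraint.
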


\begin{proof}
For a diagonal valued cumulant to be non-zero there has to
be a non-zero entry. By Lemma \ref{lemma:cumulant_value} and
Proposition \ref{prop:vanishing_cumulant} this can only
happen when $(i)$ $m$ is even; $(ii)$ $\epsilon_k +
\epsilon_{k+1} = 0$ for $1 \leq k < m-1$; and $(iii)$
$\epsilon_k i_k + \epsilon_{k+1} i_{k+1} \equiv 0$ for $1
\leq k < m$. By $(ii)$ this last condition is equivalent to
$i_k \equiv i_{k+1}$ for $1 \leq k < m$.
\end{proof}

\begin{lemma}\label{lemma:preliminary}
Let $a = (a_{ij})_{ij} \in M_b(U_b^\nc)$. We let $a = c_0s^0
+ \cdots + c_{b-1}s^{b-1}$ be the diagonal decomposition of
$a$ where $c_0, \dots, c_{b-1}$ are diagonal matrices and we
let $c_{i,j} = s^jc_is^{-j}$.
\begin{enumerate}
\item
Let $\pi$ be in $NC(l)$. If for each block $V = (j_1, \dots,
j_r)$ of $\pi$ we have $i_{j_1} + \cdots + i_{j_r} \equiv 0
$ then
\[
\tphi_\pi(c_{i_1}s^{i_1}, \dots, c_{i_l}s^{i_l}) 
= 
\tphi_\pi(c_{i_1,0}, \dots, c_{i_l i+1 + \cdots + i_{l-1}}).
\]

\item
For any $\pi \in NC(l)$ we have
\begin{multline*}
\tphi_\pi(c_{i_1}s^{i_1}, \dots, c_{i_l}s^{i_l}) 
= 
\tphi_\pi(c_{i_1,0}, \dots, c_{i_l, i+1 + \cdots + i_{l-1}}) \\
\times \tphi_\pi(s^{i_1}, \dots, s^{i_l}).
\end{multline*}
\end{enumerate}

\end{lemma}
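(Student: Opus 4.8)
The plan is to exploit the fact that, modulo the shift matrices, everything lives in the subalgebra spanned by the diagonal matrices (with entries in $U_b^\nc$) and the powers $s^j$, which behaves like a crossed product of the diagonal matrices by the cyclic shift. Writing $\alpha_j(a) = s^j a s^{-j}$ for conjugation by $s^j$ (so $\alpha_j$ depends only on $j \bmod b$, $s^b = I$, and $c_{i,j} = \alpha_j(c_i)$), the first step is the elementary ``push the shifts to the right'' identity: for any consecutive run one has $c_{i_p}s^{i_p}\cdots c_{i_q}s^{i_q} = \big(\alpha_{0}(c_{i_p})\,\alpha_{i_p}(c_{i_{p+1}})\cdots \alpha_{i_p+\cdots+i_{q-1}}(c_{i_q})\big)\, s^{i_p+\cdots+i_q}$, whose diagonal operator-valued prefactor I will compare with the matrices $c_{i_k,\sigma_k}$ on the right-hand side (here $\sigma_k = i_1+\cdots+i_{k-1}$) using $\sigma_k = \sigma_p + (\text{local prefix sum})$. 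Two facts drive the whole argument: $\cD$ is commutative, so the scalar-diagonal values produced by $\tphi$ commute and may be computed entrywise; and $\tphi$ is equivariant for the shift, $\tphi(\alpha_j(a)) = \alpha_j(\tphi(a))$, since $\alpha_j$ merely permutes diagonal entries. Together with $\tphi(D s^t) = \tphi(D)\cdot[t\equiv 0]$ for diagonal $D$, this already settles the single-block case $\pi = 1_l$ of both (1) and (2).

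For general $\pi$ I will induct on the number of blocks by peeling off an interval block. The subtlety is that once the innermost block is processed and its $\cD$-value folded into a neighbour, that neighbour is no longer one of the original matrices $c_{i_k}s^{i_k}$; so to keep the induction running I will prove the apparently stronger statement in which the diagonal parts are \emph{arbitrary}: for diagonal $b_1,\dots,b_l$, setting $y_k = b_k s^{i_k}$ and $e_k = \alpha_{\sigma_k}(b_k)$, one has $\tphi_\pi(y_1,\dots,y_l) = \tphi_\pi(e_1,\dots,e_l)\cdot\prod_{V\in\pi}[\,\sum_{k\in V} i_k \equiv 0\,]$. The lemma is the case $b_k = c_{i_k}$, for which $e_k = c_{i_k,\sigma_k}$. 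Peeling an interval block $V=\{p,\dots,q\}$, the push-to-the-right identity and equivariance give $\tphi(y_p\cdots y_q) = [\sum_{k\in V} i_k\equiv 0]\,\alpha_{-\sigma_p}\big(\tphi(e_p\cdots e_q)\big)$; folding this into $y_{p-1}$ (or into $y_{q+1}$ when $p=1$, where $\sigma_p=0$ and no conjugation appears) produces a new element of the same form $\tilde b_{p-1}s^{i_{p-1}}$ with $\tilde b_{p-1} = [\sum_{k\in V} i_k\equiv 0]\,b_{p-1}\,\alpha_{-\sigma_{p-1}}(\tphi(e_p\cdots e_q))$, using $i_{p-1}-\sigma_p = -\sigma_{p-1}$.

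The heart of the matter, and the step I expect to be the main obstacle, is checking that this fold preserves the inductive invariant, namely that the relation $e_r = \alpha_{(\text{prefix sum})}(b_r)$ survives for the reduced sequence. This hinges on the congruence $\sigma_{q+1}\equiv\sigma_p \pmod b$, which holds precisely because the processed block satisfies $\sum_{k\in V} i_k\equiv 0$ (when it does not, the prefactor $[\,\sum_{k\in V} i_k\equiv 0\,]$ kills the fold and both sides vanish, matching the vanishing factor in the claim); and on the identity $\alpha_{\sigma_{p-1}}(\tilde b_{p-1}) = e_{p-1}\,\tphi(e_p\cdots e_q)$, which shows the modified $y$-neighbour corresponds under $\alpha_{\sigma_{p-1}}$ to exactly the modified $e$-neighbour obtained by folding on the right-hand side. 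Granting the invariant, the induction hypothesis applies to the two reduced sequences and the block-sum factors multiply correctly. Finally, running the same peeling with all $b_k = 1$ gives $\tphi_\pi(s^{i_1},\dots,s^{i_l}) = \prod_{V\in\pi}[\,\sum_{k\in V} i_k\equiv 0\,]\,I$, so the general statement is exactly (2); and when every block sums to $0 \bmod b$ this factor is $I$, yielding (1).
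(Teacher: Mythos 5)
Your proof is correct and follows essentially the same route as the paper: push the shift matrices to the right, use that $\tphi(Ds^{t})$ vanishes unless $t\equiv 0 \pmod b$, and factor block by block. Your induction over interval blocks with the strengthened hypothesis (arbitrary diagonal prefactors, tracking the conjugations $\alpha_{\sigma_k}$) merely supplies the bookkeeping for the nested evaluation of $\tphi_\pi$ and for reconciling local with global prefix sums, details the paper's shorter argument leaves implicit.
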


\begin{proof}
As we have seen 
\[
c_{i_1}s^{i_1} \cdots c_{i_l}s^{i_l} = 
c_{i_1,0}c_{i_2i_1} \cdots c_{i_l, i_1 + \cdots +i_{l-1}} s^{i_1 +
\cdots + i_l}.
\]
$c_{i_1,0}c_{i_2i_{1}} \cdots c_{i_l, i_1 + \cdots + i_{l-1}}$
is a diagonal matrix and $s^{i_1 + \cdots + i_l}$ will be
$0$ on the diagonal unless $i_1 + \cdots + i_l \equiv
0$. Thus $\tphi(c_{i_1}s^{i_1} \cdots c_{i_l}s^{i_l}) = 0$
unless $i_1 + \cdots + i_l \equiv 0$.

To prove $(i)$ note that for
$\tphi_\pi(c_{i_1}s^{i_1}, \dots, c_{i_l}s^{i_l}) \not = 0$
we must have that for each block $V = (j_1, \dots, j_r)$ of
$\pi$ we have $i_{j_1} + \cdots + i_{j_r} \equiv 0$ and thus
\[
\tphi(c_{i_{j_1}}s^{i_{j_1}} \cdots c_{i_{j_r}}s^{i_{j_r}})
= \tphi(c_{i_{j_1}} \cdots c_{i_{j_r}, i_{j_1}+ \cdots + i_{j_{r-1}}})
  \tphi(s^{i_{j_1} + \cdots + i_{j_r}}). 
\]
Since this applies for every block we have 
\[
\tphi_\pi(c_{i_1}s^{i_1}, \dots, c_{i_l}s^{i_l}) 
= 
\tphi_\pi(c_{i_1}, \dots, c_{i_l, i_{1} + \cdots + i_{l-1}})
\tphi_\pi(s^{i_{j_1} + \cdots + i_{j_r}}).
\]

To prove $(ii)$ note that $\tphi_\pi(s^{i_1}, \dots,
s^{i_l}) \in \{0, 1\}$ with $\tphi_\pi(s^{i_1}, \dots,
s^{i_l})\ab = 1$ only when for each block $V = (j_1, \dots,
j_r)$ of $\pi$ we have $i_{j_1} + \cdots + i_{j_r} \equiv
0$. Thus both sides of the equation in claim $(ii)$ vanish unless
the hypothesis in $(i)$ applies, in which case $(ii)$ follows from $(i)$.
\end{proof}

\begin{theorem}\label{thm:freeness_over_D}
$v_0, v_1, \dots, v_{b-1}$ are $*$-free over $\cD$.
\end{theorem}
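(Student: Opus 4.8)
\textit{Proof plan.}

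The plan is to establish $*$-freeness over $\cD$ through the operator-valued vanishing-of-mixed-cumulants criterion: the $*$-subalgebras $\mathcal{A}_k$ generated over $\cD$ by $v_k$ are free with amalgamation over $\cD$ exactly when every mixed $\cD$-valued $*$-cumulant vanishes (see \cite{ns}). Since a cumulant of order $m \geq 2$ vanishes as soon as one of its entries lies in $\cD$, and since the $\cD$-valued cumulants are $\cD$-multilinear, I would first reduce, via the operator-valued formula for cumulants with products as entries, to showing that
\[
\tilde\kappa_m\big(v_{k_1}^{(\epsilon_1)}, v_{k_2}^{(\epsilon_2)}, \dots, v_{k_m}^{(\epsilon_m)}\big) = 0
\]
whenever the $k_j$ are not all equal and each entry is one of the generators $v_{k_j}$ or $v_{k_j}^*$. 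Here I use that $\cD$ is invariant under conjugation by $s$, so that $\cD$-coefficients may be transported across the $s$-powers when normalizing words to the generators.

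Next I would convert such a generator cumulant into a cumulant of the diagonal matrices $w$. Writing $v_{k_j}^{(\epsilon_j)} = w_{\epsilon_j,k_j}\, s^{\epsilon_j k_j}$ and applying Lemma \ref{lemma:preliminary}(ii) blockwise, every partitioned moment factors as
\[
\tilde\phi_\pi\big(v_{k_1}^{(\epsilon_1)},\dots,v_{k_m}^{(\epsilon_m)}\big) = \tilde\phi_\pi\big(w_{\epsilon_1,k_1,0},\dots\big)\, g(\pi),
\]
where $g(\pi)\in\{0,1\}$ is the indicator that every block of $\pi$ has total shift $\equiv 0 \pmod b$. After M\"obius inversion this writes $\tilde\kappa_m$ of the $v$'s as the very alternating sum that computes the $\cD$-valued cumulant $\tilde\kappa_m(w_{\epsilon_1,k_1}, w_{\epsilon_2,k_2,\epsilon_1 k_1},\dots)$ of Proposition \ref{prop:vanishing_cumulant}, weighted by the extra factor $g(\pi)$. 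The key observation is that this weight is redundant: wherever $g(\pi)=0$, the corresponding product of $w$-moments already vanishes.

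The main obstacle is exactly this redundancy, i.e.\ showing that a partitioned $w$-moment vanishes whenever some block has nonzero total shift. I would settle it with a grading argument: for a $b$-th root of unity $\zeta$, conjugation on $M_b(\bC)$ by $\diag(\zeta^{-1},\dots,\zeta^{-b})$ extends, by the identity on $C(\bT)$, to a $\phi$-preserving automorphism of $M_b(\bC) * C(\bT)$ under which $v^{[\epsilon]}_{ab} \mapsto \zeta^{a-b}\,v^{[\epsilon]}_{ab}$. Hence any moment $\phi(v^{[\epsilon_1]}_{a_1 b_1}\cdots v^{[\epsilon_r]}_{a_r b_r})$ with $\sum_t (a_t-b_t) \not\equiv 0 \pmod b$ must be zero, and since $\sum_t(a_t - b_t)$ is precisely the total shift of the block, the desired vanishing follows (one could alternatively extract this from the index conditions of Lemma \ref{lemma:cumulant_value}). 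Consequently $\tilde\kappa_m$ of the $v$'s equals $\tilde\kappa_m(w_{\epsilon_1,k_1},\dots)$ entry by entry, and Corollary \ref{cor:vanishing_cumulants} shows this is nonzero only when $k_1 \equiv \cdots \equiv k_m \pmod b$, that is, only when all $k_j$ coincide. Since a non-constant tuple always has an adjacent unequal pair, every mixed cumulant vanishes and $*$-freeness over $\cD$ follows.
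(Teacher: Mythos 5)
Your proposal is correct and follows the same overall strategy as the paper: decompose $v_k^{(\epsilon)} = w_{\epsilon,k}s^{\epsilon k}$, factor partitioned moments via Lemma \ref{lemma:preliminary}$(ii)$ into a $w$-part times the indicator $\tphi_\pi(s^{\epsilon_1 i_1},\dots,s^{\epsilon_m i_m})$, argue that this indicator may be discarded, and then invoke Corollary \ref{cor:vanishing_cumulants} to force $i_1\equiv\cdots\equiv i_m$. The one place where you genuinely diverge is the justification for discarding the indicator: the paper does this at the level of cumulants (step $(*_2)$ of its computation), expanding $\tphi_\pi$ into $\sum_{\sigma\le\pi}\tk_\sigma$ and using Corollary \ref{cor:vanishing_cumulants} to see that every surviving $\sigma$, hence every block of $\pi$, has total shift $\equiv 0$; you instead kill the offending moments directly via the $\bZ_b$-grading coming from the $\phi$-preserving automorphism $e_{ij}\mapsto\zeta^{j-i}e_{ij}$ of $M_b(\bC)*C(\bT)$, which sends $v^{[\epsilon]}_{ab}$ to $\zeta^{a-b}v^{[\epsilon]}_{ab}$. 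That automorphism argument is valid (it fixes $e_{11}$, so it restricts to $U_b^\nc$ and preserves $\phi$) and is arguably more robust, since it does not depend on the explicit cumulant formulas of Lemma \ref{lemma:cumulant_value} but only on the symmetry of the Brown algebra; the paper's version has the advantage of staying entirely inside the combinatorial machinery it has already set up. Your treatment of the interspersed $\cD$-coefficients (absorbing $s^jd s^{-j}\in\cD$ into the adjacent diagonal factor) is exactly what the paper does when it collects them into the factor $\tilde d$, so no gap there.
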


\begin{proof}
We shall show that for $d_1, \dots, d_{m-1} \in \cD$ we have
$\tilde\kappa_m( v_{i_1}^{(\epsilon_1)}d_1,\ab \dots,
v_{i_{m-1}}^{(\epsilon_{m-1})}  d_{m-1},
v_{i_m}^{(\epsilon_m)}) = 0$ unless: $(i)$ $m$ is even;
$(ii)$ $i_1 \equiv i_2 \equiv \cdots \equiv i_m$; $(iii)$
and $\epsilon_k + \epsilon_{k+1} \equiv 0$.  We shall let
$d_{i,j} = s^j d_i s^{-j}$ and $\tilde d = d_{1,i_1} \cdots
d_{b-1, i_1 + \cdots + i_{b-1}}$. Recall that
$v_{i_k}^{(\epsilon_k)} = w_{\epsilon_k, i_k} s^{\epsilon_k
  i_k}$.

\begin{eqnarray}\label{eq:vanishing_cumulants}\lefteqn{
\tk_m(w_{\epsilon_1,i_1}s^{\epsilon_1 i_1}d_1, \dots,
w_{\epsilon_{m-1}, i_{m-1}}s^{\epsilon_{m-1}i_{m-1}}d_{m-1},
w_{\epsilon_m,i_m} s^{\epsilon_m i_m}) } \notag\\
& = &
\sum_{\pi \in NC(m)} \kern-0.75em\mu(\pi, 1_m) \tphi_\pi
(w_{\epsilon_1,i_1}s^{\epsilon_1 i_1}d_1, \dots, \cdots
d_{m-1}, w_{\epsilon_m,i_m} s^{\epsilon_m i_m}) \notag\\
& \stackrel{(*_1)}{=} &
\sum_{\pi \in NC(m)} \mu(\pi, 1_m) \tphi_\pi
(w_{\epsilon_1,i_1}, \dots, w_{\epsilon_m,i_m, \epsilon_1
  i_1 + \cdots + \epsilon_{m-1}i_{m-1}}) \notag\\
&&
\qquad\qquad \mbox{} \times \tphi_\pi(s^{\epsilon_1 \cdot
  i_1}, \dots, s^{\epsilon_m \cdot i_m}) \, \tilde d
\notag\\
& = &
\sum_{\pi \in NC(m)} \kern-1em \mu(\pi, 1_m) \kern-0.5em
\mathop{\sum_{\sigma \in NC(m)}}_{\sigma \leq \pi} \kern-0.5em
\tphi_\pi(s^{\epsilon_1 \cdot i_1}, \dots, s^{\epsilon_m \cdot i_m})
 \, \tilde d  \notag \\
 &&
\qquad\qquad \mbox{} \times \tk_\sigma (w_{\epsilon_1,i_1},
\dots, w_{\epsilon_m,i_m, \epsilon_1 i_1 + \cdots +
  \epsilon_{m-1}i_{m-1}}) \notag\\
& \stackrel{(*_2)}{=} & \kern-1.5em
\sum_{\pi \in NC(m)} \kern-0.75em \mu(\pi, 1_m) \kern-1em
\mathop{\sum_{\sigma \in NC(b)}}_{\sigma \leq \pi}
\kern-0.75em \tk_\sigma (w_{\epsilon_1,i_1}, \dots,
w_{\epsilon_m,i_m, \epsilon_1 i_1 + \cdots +
  \epsilon_{m-1}i_{m-1}}) \, \tilde d \notag \\
& = & \kern-1em
\sum_{\pi \in NC(m)} \mu(\pi, 1_m)
\tphi_\pi(w_{\epsilon_1,i_1}, \dots, w_{\epsilon_m,i_m,
  \epsilon_1 i_1 + \cdots + \epsilon_{m-1}i_{m-1}}) \,
\tilde d \notag\\
& = &
\tk_m(w_{\epsilon_1,i_1}, \dots, w_{\epsilon_m,i_m,
  \epsilon_1 i_1 + \cdots + \epsilon_{m-1}i_{m-1}}) \,
\tilde d.
\end{eqnarray}
In the calculation above $(*_1)$ holds by Lemma
\ref{lemma:preliminary} $(ii)$.  By Corollary
\ref{cor:vanishing_cumulants} we know that
$\tk_\sigma(w_{\epsilon_1,i_1}, \dots, w_{\epsilon_m,i_m,
  \epsilon_1 i_1 + \cdots + \epsilon_{m-1}i_{m-1}}) = 0$
unless for each block $(l_1, \dots, l_t)$ of $\sigma$ we
have $i_{l_1} + \cdots + i_{l_t} \equiv 0$. If $\sigma \leq
\pi$ the same condition holds for any block of $\pi$. Thus
for these $\sigma$'s we have $\tphi_\sigma(s^{\epsilon_1
  \cdot i_1}, \dots, s^{\epsilon_m \cdot i_m})
=\tphi_\pi(s^{\epsilon_1 \cdot i_1}, \dots, s^{\epsilon_m
  \cdot i_m}) = 1$. This justifies $(*_2)$.

By Corollary \ref{cor:vanishing_cumulants} the cumulant in
\eqref{eq:vanishing_cumulants} vanishes unless: $(i)$ $m$ is
even; $(ii)$ $\epsilon_k + \epsilon_{k+1} = 0$ for $1 \leq k
< n$; and $(iii)$ $i_1 \equiv \cdots \equiv i_m$.
\end{proof}

\begin{remark}
From \cite[Thm. 3.19]{am} we can conclude that $v_0$,
$\{v_1, v_{b-1}\}$, \dots, $\{v_{b/2-1}, v_{b/2 + 1}\}$,
$v_{b/2}$ are free (assuming $b$ is even). Because of the
additional structure of $v$ we get the following stronger
conclusion.
\end{remark}

\begin{theorem}\label{thm:freeness_over_C}
$v_0, v_1, \dots, v_{b-1}$ are $R$-diagonal and $*$-free over $\bC$.
\end{theorem}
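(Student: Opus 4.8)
The plan is to establish both conclusions simultaneously at the level of the scalar free cumulants $\kappa_m$ attached to the trace $\Phi = \tau\circ\tphi$, where $\tau(\diag(\lambda_1,\dots,\lambda_b)) = \tfrac1b\sum_{l=1}^b \lambda_l$ is the normalized trace on $\cD$ (so that $\Phi=\tau\circ\tphi$ indeed recovers the $\Phi$ of Section 2). Recall that a family is $*$-free over $\bC$ precisely when all its mixed scalar $*$-cumulants vanish, and that $v_k$ is $R$-diagonal precisely when every scalar $*$-cumulant in $v_k, v_k^*$ vanishes except the alternating ones $\kappa_{2r}(v_k,v_k^*,\dots)$ and $\kappa_{2r}(v_k^*,v_k,\dots)$. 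Both vanishing patterns are already encoded in Corollary \ref{cor:vanishing_cumulants}: with the labels $i_1,\dots,i_m$ running over $\{0,\dots,b-1\}$, condition $(iii)$ forces $i_1\equiv\cdots\equiv i_m \pmod b$, hence all labels equal, so any cumulant mixing two different generators must vanish; and within a single label the surviving terms are exactly those with $m$ even and $\epsilon_k+\epsilon_{k+1}=0$, i.e. alternating. Thus the whole theorem reduces to transferring these vanishing statements from the $\cD$-valued cumulants $\tk_m$ to the scalar cumulants $\kappa_m$.

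The observation that makes this transfer possible is that \emph{every} $\cD$-valued cumulant of the generators is already a scalar multiple of the identity of $\cD$. Indeed, by Lemma \ref{lemma:preliminary} the shift factors split off, so it suffices to treat cumulants of the diagonal matrices $w_{\epsilon,k}$, and Proposition \ref{prop:vanishing_cumulant} identifies the $l^{\ith}$ diagonal entry of such a cumulant with a single scalar cumulant of the $v_{ij}$'s whose indices are obtained from those of the $(l+1)^{\ith}$ entry by a simultaneous shift of all indices by $1$. By Lemma \ref{lemma:cumulant_value} the value of such a cumulant is either $0$ or $b^{1-m}\beta_{m/2}$, and which alternative occurs depends only on the kernel condition $\ker(i)\geq \epsilon\gamma\delta\gamma^{-1}\epsilon$, which is invariant under a uniform shift of all indices. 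Hence the diagonal entry does not depend on $l$, and $\tk_m(w_{\epsilon_1,i_1},\dots) \in \bC\,1_\cD$.

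With this in hand the passage to scalars is, conceptually, the heart of the argument. Because each block cumulant appearing in the operator-valued moment--cumulant expansion of $\tphi$ is a central scalar, all the nested insertions trivialize: for any word in the generators, $\tphi_\sigma$ factors as a product of scalars over the blocks of $\sigma$, so $\tphi$ of the word lies in $\bC\,1_\cD$ and, applying $\tau$, the scalar moment--cumulant formula is reproduced verbatim with scalar cumulants equal to the $\tk_m$. In other words the amalgamation over $\cD$ collapses and $\kappa_m(v_{i_1}^{(\epsilon_1)},\dots,v_{i_m}^{(\epsilon_m)}) = \tk_m(w_{\epsilon_1,i_1},\dots)$ as a scalar. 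The point requiring care is the bookkeeping showing that scalar-valuedness of all the block cumulants really does force $\tphi$ of an arbitrary word into $\bC\,1_\cD$; this is where Theorem \ref{thm:freeness_over_D} is used, since it guarantees that the blocks mixing two different labels contribute $0$ (hence a scalar), while the single-label blocks are scalar by the previous paragraph.

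It then remains only to read off Corollary \ref{cor:vanishing_cumulants}. A cumulant whose labels are not all congruent mod $b$ corresponds to a mixed word in two distinct generators and vanishes, and since it suffices to verify the vanishing of mixed cumulants on the generating sets $\{v_k, v_k^*\}$ (the cumulants of products keep mixed inputs mixed), this yields $*$-freeness of $v_0,\dots,v_{b-1}$ over $\bC$. A cumulant with a single label $k$ and non-alternating signs vanishes as well, which is exactly the $R$-diagonality of $v_k$; the surviving alternating cumulants take the value $b^{1-m}\beta_{m/2}$, matching the signed Catalan description. The degenerate case $k=0$, where $v_0\in\cD$ is the diagonal matrix $\diag(v_{11},\dots,v_{bb})$, is covered by the same computation. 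I expect the main obstacle to be the collapse step of the third paragraph---verifying cleanly that once all block cumulants are central the $\cD$-valued and scalar cumulants coincide---rather than any individual cumulant evaluation, which is already supplied by the earlier lemmas.
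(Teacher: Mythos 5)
Your argument follows the paper's proof: both show that the $\cD$-valued cumulants of the $v_k^{(\epsilon)}$ are scalar multiples of $1_\cD$ (via the shift-invariance of the kernel condition in Lemma \ref{lemma:cumulant_value} applied entrywise through Proposition \ref{prop:vanishing_cumulant}), deduce that the scalar cumulants with respect to $\tr\circ\tphi$ coincide with these $\cD$-valued cumulants, and then read off freeness and $R$-diagonality from Theorem \ref{thm:freeness_over_D} and Corollary \ref{cor:vanishing_cumulants}. Your third paragraph merely spells out in more detail the collapse step that the paper compresses into the assertion $\kappa_m=\tr\circ\tk_m$; the route is the same.
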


\begin{proof}
In Theorem \ref{thm:freeness_over_D} we proved freeness over
$\cD$, the diagonal scalar matrices, so it suffices to show that $\tk_m(v_{i_1}^{(\epsilon_1)}, \dots, v_{i_m}^{(\epsilon_m)}) = \kappa_m( v_{i_1}^{(\epsilon_1)}, \dots,\ab v_{i_m}^{(\epsilon_m)})$. By Lemma
\ref{lemma:cumulant_value}, $\tk_m(v_{i_1}^{(\epsilon_1)}, \dots, v_{i_m}^{(\epsilon_m)})$, the diagonal valued cumulant in
\eqref{eq:vanishing_cumulants}, is actually a multiple of the
identity matrix. On the other hand $\phi = \tr \circ \phi_b$, where $\phi_b$ is the state on $M_b(\bC) \otimes U_b^{nc}$ given in Definition \ref{def:brown_algebra}. Thus $\kappa_m = \tr \circ \tk_m$. Hence $\tk_m(v_{i_1}^{(\epsilon_1)}, \dots, v_{i_m}^{(\epsilon_m)}) = \kappa_m( v_{i_1}^{(\epsilon_1)}, \dots,\ab v_{i_m}^{(\epsilon_m)})$. Thus we have vanishing of mixed cumulants and
hence freeness.
\end{proof}

\begin{theorem}\label{thm:16}
The transpose of a Haar unitary has the same
$*$-distri\-bution as $b^{-1}(u_1 + \cdots + u_{b^2})$ where
$u_1, \dots, u_{b^b}$ are $b^2$ $*$-free Haar unitaries;
i.e., $v^t \stackrel{D}{\sim} b^{-1}v^{\boxplus b^2}$.
\end{theorem}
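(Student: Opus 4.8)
The plan is to reduce the statement to a comparison of free $*$-cumulants, exploiting the diagonal decomposition $v^t = v_0 + \cdots + v_{b-1}$ together with the $*$-freeness of $v_0, \dots, v_{b-1}$ established in Theorem \ref{thm:freeness_over_C}. Since a $*$-distribution is determined by its free $*$-cumulants, it suffices to show that $v^t$ and $b^{-1}v^{\boxplus b^2}$ have the same mixed $*$-cumulants $\kappa_m\big((\cdot)^{(\epsilon_1)}, \dots, (\cdot)^{(\epsilon_m)}\big)$ for all $m$ and all $\epsilon_1, \dots, \epsilon_m \in \{-1,1\}$.

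First I would record the $*$-cumulants of a single summand $v_k$. Taking all arguments equal to $v_k^{(\epsilon)}$ (so that $i_1 = \cdots = i_m = k$ and the factor $\tilde d$ is the identity) in the computation \eqref{eq:vanishing_cumulants}, Proposition \ref{prop:vanishing_cumulant} shows that every diagonal entry of the $\cD$-valued cumulant $\tk_m(v_k^{(\epsilon_1)}, \dots, v_k^{(\epsilon_m)})$ equals the scalar Brown-algebra cumulant of Lemma \ref{lemma:cumulant_value}; this equals $b^{1-m}\beta_{m/2}$ precisely when $m$ is even and $\epsilon_1, \dots, \epsilon_m$ alternate in sign, and vanishes otherwise. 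As this value is independent of the diagonal position, $\tk_m$ is a scalar multiple of the identity, and the identity $\kappa_m = \tr \circ \tk_m$ used in the proof of Theorem \ref{thm:freeness_over_C} yields $\kappa_m(v_k^{(\epsilon_1)}, \dots, v_k^{(\epsilon_m)}) = b^{1-m}\beta_{m/2}$ on the alternating pattern and $0$ otherwise; in particular each $v_k$ is $R$-diagonal.

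Next I would pass to the sum. Because $v_0, \dots, v_{b-1}$ are $*$-free, mixed cumulants vanish, so the only surviving contributions to $\kappa_m\big((v^t)^{(\epsilon_1)}, \dots\big)$ are the pure ones $\sum_{k=0}^{b-1}\kappa_m(v_k^{(\epsilon_1)}, \dots, v_k^{(\epsilon_m)})$. Summing the $b$ identical values from the previous step gives
\[
\kappa_m\big((v^t)^{(\epsilon_1)}, \dots, (v^t)^{(\epsilon_m)}\big) = b \cdot b^{1-m}\beta_{m/2} = b^{2-m}\beta_{m/2}
\]
on the alternating even pattern, and $0$ otherwise; in particular $v^t$ is $R$-diagonal with determining cumulants $b^{2-2r}\beta_r$. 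For the right-hand side, writing $v^{\boxplus b^2}$ as $u_1 + \cdots + u_{b^2}$ with $u_1, \dots, u_{b^2}$ $*$-free Haar unitaries, additivity of free cumulants under free convolution together with the Haar-unitary values $\kappa_m(u_j^{(\epsilon_1)}, \dots) = \beta_{m/2}$ gives $\kappa_m = b^2\beta_{m/2}$ on the alternating even pattern; rescaling by the real scalar $b^{-1}$ multiplies the $m$-th cumulant by $b^{-m}$, yielding $b^{-m}\cdot b^2\beta_{m/2} = b^{2-m}\beta_{m/2}$, which matches the value for $v^t$. Since both families of $*$-cumulants agree and vanish off the same alternating even pattern, $v^t$ and $b^{-1}v^{\boxplus b^2}$ have the same $*$-distribution.

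The computation is routine once Theorem \ref{thm:freeness_over_C} is in hand; the only point requiring care is the bookkeeping of the powers of $b$ — in particular that the single-summand normalization $b^{1-m}$, the factor $b$ from summing over the $b$ free pieces, the factor $b^2$ from the $b^2$-fold free convolution, and the $b^{-m}$ from the scaling all combine to the same exponent $2-m$ on both sides. I expect that reconciling these normalizations, and confirming that both sides are $R$-diagonal so that only the alternating pattern contributes, is the main thing to get right.
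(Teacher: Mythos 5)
Your proposal is correct and follows essentially the same route as the paper: both reduce to Theorem \ref{thm:freeness_over_C}, compute $\kappa_{2r}(v_k, v_k^*, \dots) = b^{1-2r}\beta_r$ entrywise via equation \eqref{eq:vanishing_cumulants}, Proposition \ref{prop:vanishing_cumulant} and Equation \eqref{eq:matricial_cumulants_1}, and then use freeness of the $v_k$ plus additivity and the scaling rule for cumulants to match both sides. The only difference is organizational — the paper phrases the intermediate step as ``each $bv_k$ is distributed as a sum of $b$ free Haar unitaries'' while you compare the cumulants of $v^t$ and $b^{-1}v^{\boxplus b^2}$ directly — and your power-of-$b$ bookkeeping agrees with the paper's.
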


\begin{proof}
From Theorem \ref{thm:freeness_over_C} we only have to show
that each $bv_i$ has same $*$- distribution as the sum of
$b$ $*$-free Haar unitaries. We already have shown that
$v_i$ is $R$-diagonal, so it remains to show that
$\kappa_{2m}(bv_i, bv_i^*, \dots, \ab bv_i, bv_i^*) = b
\beta_m$. By equation (\ref{eq:vanishing_cumulants}) in the
proof of Theorem \ref{thm:freeness_over_C} we have
$\tilde\kappa_{2m}(v_i, v_i^*, \dots, \ab v_i, v_i^*) =
\tilde\kappa_{2m}( w_{1,i}, w_{-1, i, 1}, \dots, w_{1,
  i},\ab w_{-1,i,i})$. By Proposition
\ref{prop:vanishing_cumulant}, the $l^{th}$ entry of this
diagonal matrix is $\kappa_{2m}(v_{i+l,l}, v_{i+l,l}^*,
\dots, v_{i+l,l},\ab v_{i+l,l}^*) = b^{1 - 2m} \beta_m$, with
the last equality by Equation
(\ref{eq:matricial_cumulants_1}). Thus
$\tilde\kappa_{2m}(v_i,\ab v_i^*, \dots, \ab v_i, \ab v_i^*) = b^{1
  - 2m} \beta_m$ and hence $\kappa_{2m}(v_i, \ab v_i^*,
\dots, \ab v_i, v_i^*) = b^{1 - 2m} \beta_m$ as claimed.
\end{proof}

\subsection{Free Independence results in the Brown algebra}
  
The result below is a non-commutative analogue of the asymptotic freeness between Haar unitaries and independent random matrices.

   Suppose that 
   $ ( \mathcal{A}, \varphi ) $
   is a 
   $ \ast $-non-commutative probability space
   such that
   $ \mathcal{A} $
   is a unital 
   $ \ast$-algebra containing the 
   $ \ast$-algebra generated by
   $ \{ v_{i, j}: \  1 \leq i,j \leq n \} $
   and some 
   $\ast$-algebra
    $ \mathcal{B}$
    free from the family
   $ \{ v_{i, j}: \  1 \leq i,j \leq n \} $. 
   As before, denote by 
   $ v$ 
   the 
   $ n \times n $ 
   matrix whose 
   $ (i, j)$-entry is 
   $ {v}_{i,j}$ (in particular,
    $ v $ 
    is unitary ) and denote by
    $ v^t $ 
    the matrix transpose of 
    $ v $,
    i.~e. the 
    $(i, j)$-th entry of 
    $ {v}^t $
    is
  $ {v}_{j, i} $.

 \begin{theorem}\label{thm:free:brown}

  With the notations from above,
  $ v $ is free from  $ M_n(\mathcal{B} )$ 
  with respect to
  $ \Phi = \varphi \circ \tr $.
 \end{theorem}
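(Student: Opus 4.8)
The plan is to establish the freeness of $v$ from $M_n(\cB)$ in two stages: first to upgrade the hypothesis that $\{v_{ij}\}$ is free from $\cB$ into freeness with amalgamation over the scalar matrices $M_n(\bC)$, and then to collapse this amalgamation using the fact that $v$ is a Haar unitary that is $*$-free from $M_n(\bC)$. Let $E : M_n(\cA) \to M_n(\bC)$ be the entrywise expectation $E((a_{ij})_{ij}) = (\varphi(a_{ij}))_{ij}$; it is the $\Phi$-preserving conditional expectation onto $M_n(\bC)$, and $\Phi = \varphi\circ\tr = \tr\circ E$ with $\tr$ the normalized trace on $M_n(\bC)$. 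Since $\cB$ is unital, $M_n(\bC)\subseteq M_n(\cB)$.

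First I would show that $M_n(U_n^\nc)$ and $M_n(\cB)$ are free over $M_n(\bC)$ with respect to $E$. This is the matrix amplification of the hypothesis and follows from the standard entrywise formula for operator-valued cumulants, $[\kappa_m^E(X_1,\dots,X_m)]_{i_0 i_m} = \sum_{i_1,\dots,i_{m-1}} \kappa_m\big((X_1)_{i_0 i_1},\dots,(X_m)_{i_{m-1}i_m}\big)$. Each entry of a matrix in $M_n(U_n^\nc)$ lies in the algebra generated by $\{v_{ij}\}$ and each entry of a matrix in $M_n(\cB)$ lies in $\cB$, so a mixed $M_n(\bC)$-valued cumulant is a sum of mixed scalar cumulants, all of which vanish by the freeness of $\{v_{ij}\}$ from $\cB$. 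In particular $v$ and $v^*$ are free from $M_n(\cB)$ over $M_n(\bC)$.

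Next I would record what the Haar unitary $v$ contributes. Inside $M_n(U_n^\nc)\cong M_n(\bC)*C(\bT)$ the element $v$ is a Haar unitary that is $*$-free from $M_n(\bC)$, by the free product construction of Definition~\ref{def:brown_algebra}. Two consequences matter. First, every $\Phi$-centred $*$-polynomial $x$ in $v$ is automatically $E$-centred: freeness of $v$ from $M_n(\bC)$ gives $\Phi(xD) = \Phi(x)\Phi(D)$ for all $D\in M_n(\bC)$, which forces $E(x) = \Phi(x)\,1$. Second, the $\Phi$-moments of $v$ against $M_n(\bC)$ are governed by this scalar freeness, which I will use on the one residual term in the last stage.

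The final stage collapses the amalgamation. Consider an alternating product of $\Phi$-centred factors taken alternately from the $*$-polynomials in $v$ and from $M_n(\cB)$; write the $M_n(\cB)$-factors as $y_k = y_k' + E(y_k)$ with $y_k' = y_k - E(y_k)$ being $E$-centred and $E(y_k)\in M_n(\bC)$, and expand. In every term in which at least one $y_k'$ survives, the $v$-factors are $E$-centred by the second stage and the scalar-matrix parts $E(y_k)$ can be absorbed into neighbouring factors without destroying centredness, leaving an alternating $E$-centred word in the two algebras that are free over $M_n(\bC)$; hence its $E$, and therefore its $\Phi$, vanishes. The single surviving term is a word lying entirely in the algebra generated by $v$ and $M_n(\bC)$, in which each $E(y_k)$ is $\Phi$-centred because $\tr(E(y_k)) = \Phi(y_k) = 0$; it is therefore an alternating $\Phi$-centred word in the scalar-free pair $v$ and $M_n(\bC)$, and it vanishes as well. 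I expect the main obstacle to be precisely the bookkeeping of this last stage: tracking, for arbitrary word length, how the $M_n(\bC)$-valued parts $E(y_k)$ are absorbed so that the mixed terms reduce to alternating centred words over $M_n(\bC)$, while the one purely scalar-matrix term is saved only by the centredness $\Phi(y_k) = 0$. The two preliminary stages are routine.
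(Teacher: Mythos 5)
Your proposal is correct in outline but takes a genuinely different route from the paper. The paper proves the cumulant-factorization characterization of freeness directly: it expands $\Phi\big(v^{(\varepsilon_1)}A_1\cdots v^{(\varepsilon_m)}A_m\big)$ entrywise, uses the scalar freeness of $\{v_{ij}\}$ from $\mathcal{B}$ to separate the $v$-entries from the $A$-entries over each $\pi\in NC(m)$, and then runs an induction that peels off an interval block of $\pi$, where the explicit cumulant values \eqref{eq:matricial_cumulants_1}--\eqref{eq:matricial_cumulants_2} (the Kronecker deltas and signed Catalan numbers) are what force the internal index sums to reassemble into $\Phi(A_tA_s)$ and reproduce $K_1\cdot F_1$. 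You instead argue structurally: entrywise freeness amplifies to amalgamated freeness of $M_n(U_n^{\nc})$ and $M_n(\mathcal{B})$ over $M_n(\bC)$, the Brown-algebra free product gives that $v$ is $*$-free from $M_n(\bC)$ with respect to $\Phi$, and these two facts are combined to collapse the amalgamation. Your route never touches the explicit cumulant values -- it only uses freeness of $v$ from the matrix units -- so it is more robust and conceptually cleaner; the paper's computation is more self-contained and, as a by-product, records the explicit factorization \eqref{eq:pi} that the rest of the paper reuses. The one place your sketch needs strengthening is the absorption step in stage three: after replacing $y_k$ by $y_k'+E(y_k)$ and merging the scalar parts into neighbouring $v$-factors, the merged runs have the form $x\,d_1\,x'\,d_2\,x''\cdots$ with $\Phi$-centred $x$'s and $\Phi$-centred $d$'s, and you need $E$ of the whole run to vanish, not merely $E$ of each $x$. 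This is not automatic from $E(x)=0$, but it follows by the same pairing argument as your stage two: for any $d\in M_n(\bC)$, $\tr\big(E(x d_1 x'\cdots)d\big)=\Phi(x d_1 x'\cdots d)=0$ by the scalar freeness of $v$ from $M_n(\bC)$, and faithfulness of $\tr$ on $M_n(\bC)$ finishes it. With that lemma in hand your stage three closes, and the proof is complete.
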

 
 \begin{proof}
 
 It suffices to show that 
 \begin{align*}
 \Phi
 \big(
 v^{ ( \varepsilon_1)} 
 A_1
 v^{( \varepsilon_2 ) } &
 A_2 
 \cdots
 v^{ ( \varepsilon_m )}
 A_m
  \big) \\
  = & \sum_{ \pi \in NC(m)} 
  \kappa^{\Phi}_{\pi}
  [v^{ ( \varepsilon_1)},
  v^{( \varepsilon_2 ) } ,
   \dots, v^{ ( \varepsilon_m )} ]
   \cdot
   \Phi_{ \textrm{Kr}(\pi)} 
   [ A_1, \dots, A_m ]
  \end{align*}
  for any  positive integer 
  $ m $, any matrices 
  $ A_1, A_2, \dots, A_m \in M_n ( \mathcal{B}) $
  and any
  $ \varepsilon_1, \dots, \varepsilon_m \in
  \{ 1, \ast \} $. 
 
 With the notations
 \[
 \varepsilon_s (i, j) = \left\{ 
 \begin{array}{ll}
 	(i, j) & \textrm{ if } \varepsilon_s =1 \\
 	(j, i)  & \textrm{ if } \varepsilon_s = \ast
 \end{array}\right.
 \hspace{.5cm}
 \textrm{ and }
 v^{( \varepsilon_s )} = \left\{ 
 \begin{array}{ll}
 	v & \textrm{ if } \varepsilon_s =1 \\
 	v^*  & \textrm{ if } \varepsilon_s = \ast
 \end{array}\right.
 ,
 \]
we have $
 [ v^{(\varepsilon)}]_{i, j} = {v}_{ \varepsilon(i,j)}^{(\varepsilon)} 
 $,
 thus we get
 \begin{align*}
 \Phi
 \big(
 v^{ ( \varepsilon_1)} 
 A_1 &
 v^{( \varepsilon_2 ) } 
 A_2 
 \cdots
 v^{ ( \varepsilon_m )}
 A_m
  \big) \\
  = &
  \sum_{ \overrightarrow{i}, \overrightarrow{j}}
  \varphi
  \big(
  v_{ \varepsilon_1( i_1, j_1)}^{ ( \varepsilon_1)}
  a^{(1)}_{j_1, i_2}
  v_{ \varepsilon_2( i_2, j_2)}^{ ( \varepsilon_2)}
  \cdots
  a^{(m-1)}_{j_{m-1}, i_m}
  v_{ \varepsilon_m( i_m, j_m)}^{ ( \varepsilon_m)}
   a^{(m)}_{j_{m}, i_1}
   \big)\\
   =
   \sum_{ \pi \in NC(m) }&
   \sum_{ \overrightarrow{i}, \overrightarrow{j}}
   \kappa^\varphi_{\pi}
   [
   v_{ \varepsilon_1( i_1, j_1)}^{ ( \varepsilon_1)}
 ,
    \cdots
  v_{ \varepsilon_m( i_m, j_m)}^{ ( \varepsilon_m)}
   ]
   \cdot
   \phi_{ \textrm{Kr}(\pi)} 
   [
   a^{(1)}_{j_1, i_2},
   \dots,
    a^{(m)}_{j_{m}, i_1}
   ],
  \end{align*}
 where 
 $  \overrightarrow{i} = (i_1, \dots, i_m ) $,
 $ \overrightarrow{j} = ( j_1, \dots, j_m ) $
 and
 $ a_{i,j}^{(s)} $ is the 
 $(i, j)$-th entry of the matrix
 $ A_s $.
 
 It suffices to show
   that the equality below holds true for any
   non-crossing partition 
   $ \pi $:
   \begin{align}\label{eq:pi}
   \kappa^\Phi_{\pi}
    [v^{ ( \varepsilon_1)}, &
    v^{( \varepsilon_2 ) } , 
     \dots,
      v^{ ( \varepsilon_m )} ]
     \cdot 
     \Phi_{ \textrm{Kr}(\pi)} 
     [ A_1, \dots, A_m ]\\
     = &
       \sum_{ \overrightarrow{i}, \overrightarrow{j}}
       \kappa^\varphi_{\pi}
       [
 {v}_{ \varepsilon_1( i_1, j_1)}^{ ( \varepsilon_1)}
     ,
        \cdots
{v}_{ \varepsilon_m( i_m, j_m)}^{ ( \varepsilon_m)}
       ]
       \cdot
       \phi_{ \textrm{Kr}(\pi)} 
       [
       a^{(1)}_{j_1, i_2},
       \dots,
        a^{(m)}_{j_{m}, i_1}
       ].\nonumber
   \end{align}
   
  Let
  $ \pi \in NC(m) $.
 One of the blocks of 
  $ \pi $,
   say
   $ B = ( t+1, \dots, s ) $,
    is an interval.
   Then, denoting by
    $ \widetilde{\pi} $
    the non-crossing partition obtained by removing the block
    $ B $ 
    from 
    $ \pi $,
    we have that
    \begin{align*}
    &\kappa^\Phi_{\pi}
       [v^{ ( \varepsilon_1)}, 
       v^{( \varepsilon_2 ) } , 
        \dots,
         v^{ ( \varepsilon_m )} ]
         = K_1 \cdot 
         \kappa^\Phi_{ \widetilde{\pi}} 
         [
    v^{ ( \varepsilon_1)}, 
    \dots
          v^{( \varepsilon_t ) } ,
          v^{( \varepsilon_{s+1} ) } , 
           \dots,
            v^{ ( \varepsilon_m )}     
         ]\\
   &\Phi_{ \textrm{Kr}(\pi)} 
     [ A_1, \dots, A_m ]
     = F_1
      \cdot 
    \Phi_{\textrm{Kr}(\widetilde{\pi}) } 
      [ A_1, 
      \dots,
      A_{t-1},
      (A_{t}\cdot A_{s}),
      A_{s+1},
       \dots,
        A_m ]  
           \end{align*}
    where
    \begin{align*}
    K_1 & = 
    \kappa^\Phi_{ s -t }
    \big( 
    v^{( \varepsilon_{t+1})},
    v^{( \varepsilon_{t+2})},
    \dots,
    v^{( \varepsilon_{s})}
    \big)\\
    F_1 & =
    \Phi 
    (A_{t+1}
    )
    \cdot
    \Phi 
       (A_{t+2}
       )
       \cdots
       \Phi 
          (A_{s-1}
          ).
    \end{align*}
    Also, we have that
    \begin{align*}
   \kappa^{\varphi}_{\pi}
       [ &
 {v}_{ \varepsilon_1( i_1, j_1)}^{ ( \varepsilon_1)}
     , 
        \dots
{v}_{ \varepsilon_m( i_m, j_m)}^{ ( \varepsilon_m)}
       ]\\
       & = 
        K_2
         (
        \overrightarrow{i_{\alpha}}, 
        \overrightarrow{ j_{\alpha} }
         )
        \cdot
     \kappa^{\varphi}_{ \widetilde{\pi} }
     [
 {v}_{ \varepsilon_1( i_1, j_1)}^{ ( \varepsilon_1)}
         ,
         \dots,
 {v}_{ \varepsilon_1( i_t, j_t)}^{ ( \varepsilon_t)}
             ,
{v}_{ \varepsilon_1( i_{s+1}, j_{ s + 1})}^{ ( \varepsilon_{ s+1})}
        , 
        \dots,
{v}_{ \varepsilon_m( i_m, j_m)}^{ ( \varepsilon_m)}          
     ]
    \end{align*}
    and
    \begin{align*}
    & \phi_{ \textrm{Kr}(\pi)} 
          [ 
          a^{(1)}_{j_1, i_2},
          \dots,
           a^{(m)}_{j_{m}, i_1}
          ]\\
           & =
     F_2
      (
            \overrightarrow{i_{\alpha}}, 
            \overrightarrow{ j_{\alpha} }
             )
     \cdot 
       \varphi_{ \textrm{Kr}(\widetilde{\pi})}   
       [
       a^{(1)}_{j_1, i_2},
                \dots,
                a^{(t-1)}_{j_{t-1}, i_t},
                \big(
    a^{(t)}_{j_t, i_{t+1}}  
    \cdot
    a^{(s)}_{j_s, i_{s+1}}            
                \big),
      a^{(s+1)}_{j_{s+1}, i_{s+2}}, \\
      & \hskip24em\dots,
      a^{(m)}_{j_{m}, i_1}          
       ] 
    \end{align*}
    where
    $ \overrightarrow{i_{\alpha}}
    =
    ( i_{t+1}, i_{t+2}, \dots, i_{s})
    $ ,
    respectively
    $ \overrightarrow{j_{\alpha}}
       =
       ( j_{t+1}, j_{t+2}, \dots, j_{s})
       $, 
       and
   \begin{align*}
   K_2
   (
              \overrightarrow{i_{\alpha}}, 
              \overrightarrow{ j_{\alpha} }
               )
    = &
      \kappa^{\varphi}_{ s-t }
      ( 
  u_{ \varepsilon_{t+1}( i_{ t+1}, j_{t+1})}^{ ( \varepsilon_{t+1})} ,
   u_{ \varepsilon_{t+2}( i_{ t+2}, j_{t+2})}^{ ( \varepsilon_{t+2})} , 
   \dots, 
   u_{ \varepsilon_{s}( i_{ s}, j_{s})}^{ ( \varepsilon_{s})} 
     ) \\
     F_2
     (
                \overrightarrow{i_{\alpha}}, 
                \overrightarrow{ j_{\alpha} }
                 )
      = & 
      \phi( a^{(t+1)}_{ j_{t+1}, i_{t+2}})
        \cdot
        \phi( a^{(t+2)}_{ j_{t+2}, i_{t+3}})
        \cdots
        \phi( a^{(s-1)}_{ j_{s-1}, i_{s}}).
     \end{align*}
   
   Since 
   $ v $ 
   is R-diagonal,
   $ K_1 $
   cancels unless  
      $ \varepsilon_{t+l} \neq \varepsilon_{t+l+1} $
       for all
       $ l=1, \dots, s-t-1$;
   according to        
   (\ref{eq:matricial_cumulants_1}) and (\ref{eq:matricial_cumulants_2}),
   so does 
   $ K_2 $ 
   for any
   $ \overrightarrow{i} $, 
   $ \overrightarrow{j} $.
  
  Suppose that 
    $ \varepsilon_{t+l} \neq \varepsilon_{t+l+1} $
          for all
          $ l=1, \dots, s-t-1$.
          Then
    \begin{align*}
    K_1 = (-1)^{ s-t -1} \cdot \mathit{Cat}_{s-t-1}.
    \end{align*}      
 On the other hand, utilizing 
  (\ref{eq:matricial_cumulants_1}) and (\ref{eq:matricial_cumulants_2}),
  we have that
  \begin{align*}
   K_2 
  (
             \overrightarrow{i_{\alpha}}, 
             \overrightarrow{ j_{\alpha} }
              )
              = 
  n^{1-2r}(-1)^{r -1} \mathit{Cat}_{r-1}
  \cdot
  \delta_{j_s}^{i_{t+1}}
  \delta_{j_{t+1}  }^{ i_{t+2}} 
  \delta_{j_{t+2}  }^{ i_{t+3}}         
  \cdots
  \delta_{j_{ s -1 }}^{ i_s }
   \end{align*}
  therefore
  \begin{align*}
  \sum_{ \overrightarrow{i_{\alpha}}, 
              \overrightarrow{ j_{\alpha} }
              }
              &
    \varphi(a^{(t)}_{j_t, i_{t+1}}  
          \cdot
          a^{(s)}_{j_s, i_{s+1}} )
      K_2 
     (
                \overrightarrow{i_{\alpha}}, 
                \overrightarrow{ j_{\alpha} }
                 )
                 \cdot
     F_2 
    (
               \overrightarrow{i_{\alpha}}, 
               \overrightarrow{ j_{\alpha} }
                )  \\
     &  =   \left[ \sum_{ \overrightarrow{i_{\alpha}}, 
                   \overrightarrow{ j_{\alpha} }
                   }
     \varphi([ A_t A_s ]_{ j_t, i_{s+1}})
                         \cdot
    n^{1-2(s-t)}(-1)^{s-t -1} \mathit{Cat}_{s-t-1} 
     \right] \\
    & 
     \cdot  
    \prod_{l=t+1}^{s-1}
    \Tr\circ \varphi(A_{l})
    \cdot \big| \big\{  ( \overrightarrow{i_{\alpha}}, 
                  \overrightarrow{ j_{\alpha} } ) 
                  :\
                   \delta_{j_s}^{i_{t+1}}
                    \delta_{j_{t+1}  }^{ i_{t+2}} 
                    \delta_{j_{t+2}  }^{ i_{t+3}}         
                    \cdots
                    \delta_{j_{ s -1 }}^{ i_s } =1
                   \big\} \big|\\
    & =\varphi(  
       [ A_t A_s ]_{ j_t, i_{s+1}})
                            \cdot  
    K_1
    \cdot
   F_1                        
  \end{align*}
  
  Denoting
  $ \overrightarrow{i_{\beta}} = 
  (i_1, i_2, \dots, i_{t}, i_{s+1}, i_{s + 2},
  \dots, i_m )$
  and, respectively\\
  $ \overrightarrow{j_{\beta}} = 
  (
   j_1, j_2, \dots, j_{t}, j_{s+1}, j_{s + 2},
   \dots, j_m )$,
   the relation above gives that
     \begin{align*}
        & \sum_{ \overrightarrow{i}, \overrightarrow{j}}
         \kappa^{\varphi}_{\pi} 
         [
v_{ \varepsilon_1( i_1, j_1)}^{ ( \varepsilon_1)}
       ,
          \cdots
v_{ \varepsilon_m( i_m, j_m)}^{ ( \varepsilon_m)}
         ]
         \cdot
         \phi_{ \textrm{Kr}(\pi)} 
         [
         a^{(1)}_{j_1, i_2},
         \dots,
          a^{(m)}_{j_{m}, i_1}
         ]
     \\
       =  \mbox{} & 
       K_1 \cdot F_1 \cdot
       \sum_{
       \overrightarrow{ i_{\beta} }, 
       \overrightarrow{ j_{\beta} }
       }
       \Big(
         \kappa^{\varphi}_{ \widetilde{\pi} }
           [
v_{ \varepsilon_1( i_1, j_1)}^{ ( \varepsilon_1)}
               ,
               \dots,
 v_{ \varepsilon_1( i_t, j_t)}^{ ( \varepsilon_t)}
                   ,
v_{ \varepsilon_1( i_{s+1}, j_{ s + 1})}^{ ( \varepsilon_{ s+1})}
              , 
              \dots,
v_{ \varepsilon_m( i_m, j_m)}^{ ( \varepsilon_m)}          
           ] \\
            & \cdot
       \varphi_{ \textrm{Kr}(\widetilde{\pi})}   
           [
           a^{(1)}_{j_1, i_2},
                    \dots,
                    a^{(t-1)}_{j_{t-1}, i_t},
                    [A_t A_s]_{ j_t, i_{s+1}},
          a^{(s+1)}_{j_{s+1}, i_{s+2}},
          \dots,
          a^{(m)}_{j_{m}, i_1}          
           ]        
            \big)
     \end{align*}
   and 
   (\ref{eq:pi})
    follows by induction on 
   $ m $.
  \end{proof}
  
  \begin{remark}\label{non-unit-inv}
 $ M_n(\mathcal{B})$
  is \emph{not} free from
 $ v^t$. 
 For example, 
 $ v^t $ 
 is not free from the matrix
 \[
 A = 
 \left[\begin{array}{c|c}
 \begin{array}{cc}0&1\\ 1&0\end{array} & 0 \\ \hline
 0 & 0_{ n -2} 
 \end{array}\right].
 \]
 If 
 $ v^t $
 and
 $ A $
 were free, since
  $ \Phi (A) = \Phi(v) = 0 $,
  we would have that
  \[
  \Phi\big( 
   v^t A ( v^t )^\ast A v^t A ( v^t )^\ast A 
    \big)=0.
  \]
  On the other hand, denoting by
  $ a_{i, j} $
  the  $(i, j)$-th entry of 
  $ A $, 
  we have that
  \begin{align*}
 \Phi\big(
 v^t A ( v^t )^\ast  A v^t  A & ( v^t )^\ast  A 
 \big) \\
= & \frac{1}{b}
 \sum_{ i_1, \dots, i_8 =1}^b
 \varphi \big( {v}_{i_2, i_1}
  a_{i_2, i_3} 
 {v}^\ast_{i_3, i_4} 
 a_{i_4, i_5}
 {v}_{i_6, i_5}
  a_{i_6, i_7} 
{v}^\ast_{i_7, i_8}
 a_{i_8 , i_1}
 \big), 
  \end{align*}
 where
 $ a_{i, j} $ 
 is the $(i, j)$-th entry of 
 $ A $. 
 In particular, 
 $ a_{i,j}= 0 $
 whenever
 $ i> 2 $
 or 
 $ j > 2 $,
 so
 \begin{align*}
 \Phi\big(
 v^t A  & ( v^t )^\ast  A   v^t  A  ( v^t )^\ast  A 
 \big) \\
= & \frac{1}{n}
 \sum_{ i_1, \dots, i_8 =1}^2
 \varphi \big(
  v_{i_2, i_1}
  v^\ast_{i_3, i_4} 
v_{i_6, i_5}
  v^\ast_{i_7, i_8}
 \big)
 a_{i_2, i_3} a_{i_4, i_5} 
 a_{i_6, i_7} a_{i_8 , i_1}\\
 = &
 \frac{1}{n}
 \sum_{ \pi \in NC(4) }
 \sum_{ i_1, \dots, i_8 =1}^2
 \kappa^{\varphi}_{ \pi }[
  v_{ i_2, i_1},
 v^\ast_{ i_3, i_4}, 
 v_{i_6, i_5},
 v^\ast_{i_7, i_8}  ]
  a_{i_2, i_3} a_{i_4, i_5} 
  a_{i_6, i_7} a_{i_8 , i_1}
  \end{align*} 
  Since
  $ \varphi( v_{i, j} ) = 0 $
  for all
  $ i, j $,
  there are only three partitions to consider:
  $ \{ (1, 2), (3, 4) \} $,
  $ \{ (1, 4), (2, 3) \} $,
  and
  $ \{ ( 1, 2, 3, 4)\} $.
  
  If
  $ \pi =\{ (1, 2), (3, 4) \} $,
  then
  \begin{align*}
  \kappa_{ \pi } 
  [
    v_{ i_2, i_1},
   v^\ast_{ i_3, i_4}, 
   v_{i_6, i_5},
   v^\ast_{i_7, i_8}  ]
   =\kappa_2 (v_{ i_2, i_1},
      v^\ast_{ i_3, i_4} )
       \cdot 
       \kappa_2 ( v_{i_6, i_5},
          v^\ast_{i_7, i_8} ).
\end{align*}
From Equation (\ref{eq:matricial_cumulants_1}), we have that
$ \kappa_2 (v_{ i_2, i_1},
      v^\ast_{ i_3, i_4} )
      \neq 0 
      $
      only if
      $ i_2 = i_3 $.
Then 
$ a_{i_2, i_3} =0 $
since
$ a_{i, i}= 0 $
for all 
$ i $.

Similarly, if
$ \pi = \{ ( 1, 4), (2, 3)\}$, 
then
 \begin{align*}
  \kappa_{ \pi } 
  [
    v_{ i_2, i_1},
   v^\ast_{ i_3, i_4}, 
   v_{i_6, i_5},
   v^\ast_{i_7, i_8}  ]
   =\kappa_2 (v_{ i_2, i_1},
      v^\ast_{ i_7, i_8} )
       \cdot 
       \kappa_2 ( v^\ast_{i_3, i_4},
          v_{i_6, i_5} ),
\end{align*}
and now
$ \kappa_2 ( v^\ast_{i_3, i_4},
          v_{i_6, i_5} )
           \neq 0$
only if
$ i_4 = i_5 $,
that is
$ a_{i_4, i_5} = 0 $.

Finally, if
$ \pi = \{ (1, 2, ,3 , 4)\} $,
then
  $ \kappa_{ \pi } 
    [
      v_{ i_2, i_1},
     v^\ast_{ i_3, i_4}, 
     v_{i_6, i_5},
     v^\ast_{i_7, i_8}  ]
     \neq 0 
     $
 only if
 $i_1=i_4 $,
 $ i_3 = i_6 $,
 $ i_5 = i_8 $
 and
 $ i_2 = i_7 $.
  \begin{align*}
  \Phi\big(
 v^t A   ( v^t )^\ast  A   v^t  A  ( v^t )^\ast  A 
  \big)
  =  & - n^{-4} \sum_{ i_1, i_2, i_3, i_5 =1}^2
  a_{i_2, i_3} a_{i_3, i_2} a_{i_1, i_5} a_{i_5, i_1} \\
  = & -4n^{-4} \neq 0. \hskip10em \square 
  \end{align*} 
   \end{remark}

 \section{Asymptotic free independence of different partial transposes of a Haar unitary random matrix}

  \subsection{Framework and previous results on partial transposes}
 
  In \cite{mp3}, we gave a necessary and sufficient condition for the asymptotic free independence of different families of partial transposes of Wishart random matrices. 
  More precisely, suppose that 
  $ ( b_N)_N$ ,$ ( b^\prime_N)_N $, $ (d_N)_N $
  and
  $ ( d^\prime_N )_N $
  are non-decreasing sequences of positive integers
 such that for each $ N $ we have 
  $ b_N \cdot d_N = b^\prime_N \cdot d^\prime_N = M_N $
  and that
  $ \displaystyle \lim_{N \rightarrow \infty} M_N = \infty $.
  If 
  $ W_N $
   denotes a 
   $ M_N \times M_N $ 
   Wishart random matrix, and
   $ \vartheta, \vartheta^\prime \in \{1, -1\} $, then the asymptotic free independence of the families
   $ \big( W_N^{ \Gamma_{b_N, d_N}^{( \vartheta)}} \big)_N $
   and
   $ \big( W_N^{ \Gamma_{b_N^\prime, d_N^\prime}^{( \vartheta)}} \big)_N $
   is equivalent to the condition
           \begin{align}\label{condi:19}
  \lim_{N \rightarrow \infty }
  \frac{1}{M_N^2}
         \big| \big\{  (i, j) \in [ M_N ]^2:
           \Gamma_{b^\prime_N, d^\prime_N}^{ (\vartheta^\prime)}
           (i, j) =   \Gamma_{b_N, d_N}^{ (\vartheta)}(i, j) &
            \big\} \big|= 0. 
           \end{align}
 
The main results of this Section, Theorems \ref{thm:24} and \ref{thm:25}, is that condition (\ref{condi:19}) is equivalent to asymptotic $\ast$--freeness for different partial transposes also in the case of Haar unitary random matrices.

  We will use the following technical results (proved in \cite[Theorem 3.2]{mp3}) on partial transposes.
  
  \begin{lemma}\label{lemma:gamma}${}$
  
 \begin{enumerate}
 \item[$(i)$] If
  $ \vartheta = \vartheta^\prime$, 
  then condition \emph{(\ref{condi:19}) }
    is equivalent to 
    \[  \lim_{N \rightarrow\infty }\frac{l.c.m.(b_N, b^\prime_N)}{\min(b_N,b_N')}
    =
    \lim_{N \rightarrow\infty }
    \frac{ l.c.m. (d_N, d^\prime_N)}{\min(d_N, d^\prime_N)}
    = \infty.
    \]
      
\noindent Also, for each $ N $,  the sets
  \begin{align*}
   \big\{  (i, j) \in [ M_N ]^2: \ 
 ( \Gamma_{b_N, d_N}^{ (\vartheta)})^{ -1}
 \circ
 \Gamma_{b^\prime_N, d^\prime_N}^{ (\vartheta^\prime)}
 (i, j) = (i, j)
  \big\} 
 \end{align*}
 and (for 
 $ s =1, 2 $, where $\pi_s$ is the projection on the $i$-th coordinate),
  \begin{align*}
   \big\{  (i_1, i_2, j) \in [ M_N ]^3 : \ 
 \pi_s \circ  \Gamma_{b_N, d_N}^{ (\vartheta)} (i_1, j)
 =\pi_s
 \circ
 \Gamma_{b^\prime_N, d^\prime_N}^{ (\vartheta^\prime)}
 (i_2, j) 
  \big\} 
 \end{align*}
 have the same number of elements.
 
 \item[$(ii)$]  If
  $ \vartheta \neq \vartheta^\prime $
  then the condition \emph{(\ref{condi:19})} is equivalent to
  \[ \lim_{N \rightarrow \infty }
  b_N \cdot d^\prime_N = 
  \lim_{N\rightarrow \infty}
  b^\prime_N \cdot d_N 
   =\infty.
   \]
  
  \noindent Moreover, in this case, condition \emph{(\ref{condi:19})} 
  implies that (for $ s =1, 2 $)
  \begin{enumerate}
  \item[(a)]$ | \big\{  (i_1, i_2, j) \in [ M_N ]^3 : \ 
    \Gamma_{b_N, d_N}^{ (\vartheta)} (i_1, j)
  =
  \Gamma_{b^\prime_N, d^\prime_N}^{ (\vartheta^\prime)}
  (i_2, j) 
   \big\}| \ab = o(M^2)$
  \item[(b)] $ |
   \big\{  (i_1, i_2, j) \in [ M_N ]^3 : \ 
  \pi_s \circ  \Gamma_{b_N, d_N}^{ (\vartheta)} (i_1, j)
  =\pi_s
  \circ
  \Gamma_{b^\prime_N, d^\prime_N}^{ (\vartheta^\prime)}
  (i_2, j) 
   \big\} | \ab
   = o(M^3) $
  \end{enumerate}
 \end{enumerate}
  \end{lemma}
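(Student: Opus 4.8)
The plan is to pass to block coordinates for both factorizations and reduce the equality of the two partial transposes to a single scalar identity, after which Lemma~\ref{lemma:gamma} becomes an (elementary but delicate) counting problem governed by $\gcd$'s and $\mathrm{lcm}$'s. Write $M=M_N=b_Nd_N=b'_Nd'_N$ and, suppressing the subscript $N$, let $(\beta(k),\xi(k))$ and $(\beta'(k),\xi'(k))$ denote the $(b,d)$- and $(b',d')$-coordinates of $k\in[M]$, so that $k=(\beta(k)-1)d+\xi(k)=(\beta'(k)-1)d'+\xi'(k)$. Because each $\Gamma_{b,d}^{(\vartheta)}$ is an involution, the set in \eqref{condi:19} is precisely the fixed-point set of $(\Gamma_{b,d}^{(\vartheta)})^{-1}\circ\Gamma_{b',d'}^{(\vartheta')}$, i.e.\ the \emph{agreement set} $\{(i,j):\Gamma_{b,d}^{(\vartheta)}(i,j)=\Gamma_{b',d'}^{(\vartheta')}(i,j)\}$, whose size I call $F$. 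Writing out both output coordinates of each map and eliminating the block indices via the displayed identity, I expect the two scalar equations coming from the first and second output coordinates to collapse to one and the same condition: for $\vartheta=\vartheta'$ it is $h(i)=h(j)$ with $h(k)=\xi'(k)-\xi(k)=((k-1)\bmod d')-((k-1)\bmod d)$, and for $\vartheta\neq\vartheta'$ it is $\psi(i)=\psi(j)$ with $\psi(k)=(\beta(k)-1)d-\xi'(k)$. Either way $F=\sum_v n_v^2$, where $n_v$ is the number of indices sharing the value $v$ of the relevant invariant; this is the key reduction.

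For part $(i)$ the invariant $h$ depends on $k$ only modulo $\lambda:=\mathrm{lcm}(d,d')$, and since $d\mid M$ and $d'\mid M$ we have $\lambda\mid M$; hence $n_v=(M/\lambda)\,m_v$, where $m_v$ counts residues in $[0,\lambda-1]$ with $h=v$. The Chinese Remainder Theorem makes $k\mapsto((k-1)\bmod d,(k-1)\bmod d')$ a bijection onto the $\gcd(d,d')$-compatible pairs, and a direct count gives $m_v$ a trapezoidal profile; evaluating $\sum_v m_v^2$ then yields $F/M^2=\lambda^{-2}\sum_v m_v^2=\Theta\big(\min(d,d')/\mathrm{lcm}(d,d')\big)$. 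Thus \eqref{condi:19} holds iff $\mathrm{lcm}(d,d')/\min(d,d')\to\infty$. The two displayed limits in $(i)$ are in fact equal for every $N$: from $d,d'\mid M$ one has $\mathrm{lcm}(b,b')=\mathrm{lcm}(M/d,M/d')=M/\gcd(d,d')$ and $\min(b,b')=M/\max(d,d')$, so $\mathrm{lcm}(b,b')/\min(b,b')=\max(d,d')/\gcd(d,d')=\mathrm{lcm}(d,d')/\min(d,d')$. The cardinalities of the two projection sets ($s=1,2$) I would obtain by the same coordinate expansion: the $s=1$ constraint pairs the \emph{block} indices of $i_1,i_2$ against the \emph{within-block} indices of $j$, while the $s=2$ constraint does the dual, so each reduces to counting solutions of a single linear relation among block/within-block indices, which I evaluate directly.

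For part $(ii)$ the agreement set is symmetric under interchanging $(b,d,\vartheta)$ with $(b',d',\vartheta')$, which replaces $\psi$ by $(\beta'(k)-1)d'-\xi(k)$ without changing $F$; I may therefore assume $d\ge d'$. Then $\psi(k)=d\lfloor(k-1)/d\rfloor-((k-1)\bmod d')$, and $\psi(i)-\psi(j)$ is a multiple of $d$ lying in $(-d',d')$, so $\psi(i)=\psi(j)$ is equivalent to ``$i,j$ lie in the same $(b,d)$-block and $i\equiv j\pmod{d'}$''. Counting such pairs block by block gives $F=\Theta(bd^2/d')=\Theta(b'd)=\Theta\big(\max(bd',b'd)\big)$, whence $F/M^2=\Theta\big(1/\min(bd',b'd)\big)$ and \eqref{condi:19} is equivalent to $bd'\to\infty$ and $b'd\to\infty$. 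For the bounds $(a)$ and $(b)$, rewriting the constraints in coordinates splits the index variables into independent groups, so each count factors as a product of window-counts of the shape ``number of multiples of $d$ (resp.\ $d'$) in an interval of length $d'$ (resp.\ $d$)''; summing over the free index and using the resulting equidistribution gives the count in $(a)$ as $\Theta(\max(bd',b'd))=o(M^2)$ and the count in $(b)$ as $\Theta(M^2)=o(M^3)$, both under \eqref{condi:19}.

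The bookkeeping in the coordinate reductions and the collapse to $h(i)=h(j)$ and $\psi(i)=\psi(j)$ are routine. The real work—and the main obstacle—is the exact counting of the multiplicity profiles $n_v$: to get \emph{equivalences} rather than one-sided implications I need matching lower and upper bounds, which forces a precise evaluation of $\sum_v m_v^2$ in part $(i)$ and careful control of the two-scale window-counts (and their near-equidistribution) in part $(ii)$. The elementary identity $\mathrm{lcm}(b,b')/\min(b,b')=\mathrm{lcm}(d,d')/\min(d,d')$ and the block/within-block duality underlying the projection-set counts are the small structural facts that keep these estimates tractable.
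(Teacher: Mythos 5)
First, a point of reference: the paper does not prove this lemma at all --- it is imported verbatim from \cite[Theorem 3.2]{mp3} --- so your attempt can only be judged on its own terms. Your main reductions are correct, and I could verify them. For $\vartheta=\vartheta'$ both coordinate equations of $\Gamma_{b,d}^{(\vartheta)}(i,j)=\Gamma_{b',d'}^{(\vartheta')}(i,j)$ do collapse to the single condition $h(i)=h(j)$ with $h(k)=((k-1)\bmod d')-((k-1)\bmod d)$, the multiplicity profile argument gives $F/M_N^2=\Theta\big(\min(d,d')/\mathrm{lcm}(d,d')\big)$, and your identity $\mathrm{lcm}(b,b')/\min(b,b')=\max(d,d')/\gcd(d,d')=\mathrm{lcm}(d,d')/\min(d,d')$ correctly shows the two displayed limits in $(i)$ are one and the same condition. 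Similarly, for $\vartheta\neq\vartheta'$ the reduction to $\psi(i)=\psi(j)$ and the count $F=\Theta\big(\max(bd',b'd)\big)$, hence $F/M_N^2=\Theta\big(1/\min(bd',b'd)\big)$, give the equivalence in $(ii)$. This is the substance of the lemma and you have it.

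The gap is in the remaining assertions, which you dispatch in a sentence each (``I evaluate directly'', ``using the resulting equidistribution''). For $(ii)(a)$ the efficient route, which you do not identify, is to \emph{add} the two coordinate equations of $\Gamma_{b,d}^{(\vartheta)}(i_1,j)=\Gamma_{b',d'}^{(\vartheta')}(i_2,j)$: the within-block and block contributions cancel and one is left with $i_1=i_2$, so that triple set is in bijection with the agreement set and $(a)$ follows at once from \eqref{condi:19}; the same observation is what makes a ``same number of elements'' statement provable. By contrast, the sets literally written in part $(i)$, with the projections $\pi_s$, are generically much larger than the agreement set: for $\vartheta=\vartheta'=1$ and $s=2$ the constraint reads $\xi(i_1)-\xi'(i_2)=\xi(j)-\xi'(j)$ and a direct count gives $\Theta\big(M^3/\max(d,d')\big)\geq M^2$ elements for every $N$ (already for $b=3$, $d=4$, $b'=4$, $d'=3$, $M=12$ one finds $336$ triples against $28$ agreement pairs). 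So the plan of ``evaluating directly'' cannot produce the stated cardinality equality; to close this part you would have to work with the unprojected version of the condition (where $i_1=i_2$ is forced), and for $(ii)(b)$ you still owe the two-scale window/equidistribution estimate that you explicitly defer.
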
 

Note that for $\alpha \not = \beta$ and $b_N \sim N^\alpha$ and $b_N' \sim N^\beta$ then 
\[
\lim_{N \rightarrow \infty} \frac{l.c.m.(b_N, b_N')}{\min(b_N, b_N')} = \infty
\]

 Before stating and proving the main theorems of this section, we need to review some results on the unitary Weingarten function.
 
 \subsection{On the unitary Weingarten calculus}

 \begin{notation}
 Recall the unitary Weingarten function 
 $\Wg$.
  It is a central element of the group ring $\bC[S_n]$
   and, by definition, it is the inverse of the function 
   $\sigma \mapsto N^{\#(\sigma)}$
    where 
    $N \geq n$ and 
     $\#(\sigma)$ 
     is the number of cycles in the cycle decomposition of 
     $\sigma$. 
 
 Collins  \cite{co} showed that for
  $U = (u_{ij})_{ij}$
   a
    $N \times N$ 
Haar distributed random unitary matrix we have
 \begin{equation}\label{eq:weingarten_symmetric}
 \E(u_{i_1j_1} \cdots u_{i_nj_n}
 \overline{u_{i'_1j'_1}} \cdots \overline{u_{i'_nj'_n}}) =
 \sum_{\pi, \sigma \in S_n} \Wg_N(\sigma^{-1}\pi) 
 \delta_{i, i' \circ \sigma} \delta_{j, j' \circ \pi}
 \end{equation}
 where 
 $\delta_{i, i'\circ \sigma} = 1$
  when
   $i_1 = i'_{\sigma(1)}$, \dots, $i_n = i'_{\sigma(n)}$ 
   and
    $0$ 
    otherwise. 
 \end{notation} 
  
Collins also showed that for
 $\sigma \in S_n$
  we have
\begin{equation}\label{eq:weingarten_order}
\Wg_N(\sigma)
 = w_1(\sigma) N^{-2n + \#(\sigma)} + O(N^{-2n + \#(\sigma)-2} )
\end{equation}
and provided a explicit function for
 $w_1(\sigma)$.
 Indeed
  $w_1(\sigma)$
is the product 
   $\prod_{i=1}^k (-1)^{l_i-1} C_{l_i-1}$ 
where we decompose
    $\sigma$
into a product of cycles
  $c_1 \cdots c_k$ 
and 
  $l_i$
is the number of elements in the 
   $i^{th}$
cycle, and 
    $C_r$
is the
 $r^{th}$
  Catalan number
   $ \displaystyle \frac{1}{r+1} \binom{2r}{r}$.
   
   As shown in \cite[Prop. 10]{mp2}, we can rewrite equation (\ref{eq:weingarten_symmetric}) using pairings. To do so we must recall a lemma about pairings from \cite[Lemma 2]{mp1}. 
   
   \begin{lemma}
   Given two pairings $p$ and $q$ of $[n]$ we consider $p$ and $q$ to be permutations and then decompose the product $pq$ into cycles. When we do this we can write $pq = c_1 c'_1 c_2 c'_2 \cdots c_k c'_k$ with $c'_l = q c_l^{-1}q$. Moreover the blocks of $p \vee q$ are 
       $\{ c_1 \cup c'_1, \dots, c_k \cup c'_k \}$. Thus $2\#(p \vee q) = \#(pq)$. 
   \end{lemma}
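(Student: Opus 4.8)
The plan is to exploit the single algebraic identity that conjugation by $q$ inverts $pq$. Since $p$ and $q$ are pairings they are fixed-point-free involutions, so $p^2 = q^2 = \mathrm{id}$ and $q(pq)q = qpqq = qp = (pq)^{-1}$. First I would feed this into the disjoint-cycle decomposition $pq = \prod_l c_l$: conjugation by $q$ sends each cycle $c$ to the cycle $qcq$ supported on $q(\mathrm{supp}(c))$, while $q(pq)q = (pq)^{-1}$ has cycles $c_l^{-1}$. By uniqueness of the cycle decomposition the multisets $\{qc_lq\}_l$ and $\{c_l^{-1}\}_l$ coincide, so the map $c \mapsto qc^{-1}q$ permutes the cycles of $pq$; a direct check $q(qc^{-1}q)^{-1}q = q(qcq)q = c$ shows it is an involution. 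This is exactly the correspondence $c_l \mapsto c'_l = qc_l^{-1}q$ of the statement, and it groups the cycles of $pq$ into orbits of size one or two. Everything then reduces to ruling out self-paired cycles.

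The hard part will be showing that no cycle satisfies $c = qc^{-1}q$. To handle it I would pass to $p \vee q$, whose blocks are the orbits of the dihedral group $\langle p, q\rangle$, equivalently the connected components of the graph on $[n]$ whose edges are the transpositions of $p$ and of $q$. Fixing a cycle $c = (a_1, \dots, a_\ell)$ with $pq(a_i) = a_{i+1}$, I would trace the orbit $O$ of $\langle p, q\rangle$ through $a_1$: since $p$ and $q$ are fixed-point-free involutions, every point of $O$ meets exactly one $p$-edge and one $q$-edge, so $O$ is a single cycle whose edges alternate between $p$ and $q$, and walking from $a_1$ it visits $a_1, q(a_1), a_2, q(a_2), \dots, a_\ell, q(a_\ell)$ before closing up. Hence $O = \mathrm{supp}(c) \cup q(\mathrm{supp}(c)) = \mathrm{supp}(c) \cup \mathrm{supp}(c')$ with $c' = qc^{-1}q$.

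Now $\mathrm{supp}(c)$ and $\mathrm{supp}(c') = q(\mathrm{supp}(c))$ are supports of cycles of $pq$, hence either disjoint (so $c \neq c'$, the desired conclusion) or equal (so $c = c'$). To exclude the latter I would note that if $\mathrm{supp}(c)$ were $q$-invariant then the trace above lists each point of $O$ twice, so $|O| = \ell$ and both $q$ and $p = (pq)q$ restrict to fixed-point-free involutions on $O$, forcing $\ell$ even. Walking around the resulting alternating $\ell$-cycle, the composite $pq$ advances by two steps each time, so $pq|_O$ splits into two cycles of length $\ell/2$ — contradicting that the single cycle $c$ has support all of $O$. Thus $c \neq c'$ in every case, i.e.\ the involution $c \mapsto c'$ is fixed-point-free.

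Finally I would assemble the pieces: the cycles of $pq$ split into disjoint pairs $\{c_l, c'_l\}$ with $c'_l = qc_l^{-1}q$; each orbit $\mathrm{supp}(c_l) \sqcup \mathrm{supp}(c'_l)$ is a single block of $p \vee q$, distinct pairs give distinct blocks, and every block of $p \vee q$ (being an $\langle p, q\rangle$-orbit) arises this way. Consequently the number of blocks of $p \vee q$ equals the number of pairs, namely $\tfrac12 \#(pq)$, which is the asserted identity $2\#(p \vee q) = \#(pq)$. The only delicate point is the fixed-point-freeness of the cycle involution in the third paragraph; the rest is bookkeeping around the $\langle p, q\rangle$-orbit description of $p \vee q$.
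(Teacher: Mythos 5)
The paper does not actually prove this lemma; it is quoted from \cite[Lemma 2]{mp1}, so there is no in-text proof to compare against. Your argument is correct and is essentially the standard one: the identity $q(pq)q=(pq)^{-1}$ makes $c\mapsto qc^{-1}q$ an involution on the cycles of $pq$, the $\langle p,q\rangle$-orbits (which are the blocks of $p\vee q$) are the alternating cycles $\mathrm{supp}(c)\cup q(\mathrm{supp}(c))$, and the only delicate point --- that no cycle is self-paired --- is settled correctly by your observation that on a $q$-invariant orbit of size $\ell$ the map $pq$ is rotation by two on an alternating $\ell$-cycle and hence splits into two orbits of length $\ell/2$, contradicting the assumption that $c$ is a single cycle on all of $O$. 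This gives exactly $\#(pq)=2k=2\#(p\vee q)$, so the proof is complete.
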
 

\begin{notation}\label{not:21}
Given pairings
 $p$ 
and 
 $q$
of 
  $[n]$
we denote by 
   $\Wg_N(p, q)$ 
the value 
    $\Wg_N(\sigma)$
where 
 $\sigma \in S_{n/2}$
has the same cycle decomposition as
  $c_1 c_2 \cdots c_k$,
where 
$pq = c_1 c'_1 c_2 c'_2 \cdots c_k c'_k$. 
With this notation, Collins' formula (\ref{eq:weingarten_order}) becomes for 
$p, q \in \cP_2(n)$
\begin{equation}\label{eq:weingarten_order_pairing}
\Wg_N(p,q) = w_1(\sigma) N^{-n + \#(pq)/2} + O(N^{-n + \#(pq)/2-2} ).
\end{equation}	
\end{notation}

Next, we will remind the main result from \cite{mpsz1} concerning the asymptotic behavior 
(as 
$ N \rightarrow \infty $)
of
\begin{align*}
\E \circ \tr \big(
( U^{ \sigma_{1}})^{ \varepsilon_1} 
( U^{ \sigma_{2}} )^{ \varepsilon_2}
 \cdots
(  U^{ \sigma_{n} } )^{ \varepsilon_n}
\big)
\end{align*}
where
$ U $ 
is a 
$ N \times N $
Haar unitary random matrix, 
$ \sigma_1, \dots, \sigma_n $
are entry permutations of 
$ N \times N $ 
matrices, and
$ \varepsilon: [ n ] \rightarrow \{ 1, \ast \}$.

With the notations
\[
\varepsilon_s (i, j) = \left\{ 
\begin{array}{ll}
(i, j) & \textrm{ if } \varepsilon_s =1 \\
(j, i)  & \textrm{ if } \varepsilon_s = \ast
\end{array}\right.
\hspace{.5cm}
\textrm{ and }
 z^{( \varepsilon_s )} = \left\{ 
 \begin{array}{ll}
 z & \textrm{ if } \varepsilon_s =1 \\
 \overline{z}  & \textrm{ if } \varepsilon_s = \ast
 \end{array}\right.
 ,
\]
we have that
 \begin{align*}
 \E(\tr 
 \big( ( U^{ \sigma_1})^{ \varepsilon_1} &
( U^{ \sigma_2})^{ \varepsilon_2}
\cdots 
( U^{ \sigma_n})^{  \varepsilon_n })
 \big) \\
& = \kern-1em
 \sum_{i_1,\ldots,i_{n}=1}^N  
 \frac{1}{N}
 \E\big( 
 u^{(\varepsilon_1)}_{\sigma_1\circ
 \varepsilon_1(i_1,i_2)} 
 \cdots
  u^{(\varepsilon_{n})}_{\sigma_{n}
 \circ
 \varepsilon_{n}(i_{n},i_1)}
 \big).
 \end{align*}
  where
  $(k_s,l_s)=
  \sigma_s
  \circ
  \varepsilon_s(i_s,j_s)$
   for
    $s=1,2,\ldots,n$.
 
   Denote by
   $ P_2^{ \varepsilon} ( n )   $
   the set of all pairings 
   $ p $ on 
   $ [ n ] $
   such that
    $ \varepsilon_{s} \neq \varepsilon_{p(s)} $
    and define
     $ (k_s, l_s)_{1 \leq s \leq n } $
    via 
     $(k_s,l_s)=
      \sigma_s
      \circ
      \varepsilon_s(i_s,i_{s +1})$,
 with the convention 
 $ i_{ n + 1} = i_1 $.
 Applying (\ref{eq:weingarten_symmetric}), we then obtain
     \begin{align*}
    \E \circ \tr 
    \big( ( U^{ \sigma_1})^{ \varepsilon_1} &
   ( U^{ \sigma_2})^{ \varepsilon_2}
   \cdots 
   ( U^{ \sigma_n})^{  \varepsilon_n }
    \big)
    =
    \sum_{p,q\in \mathcal{P}_2^\varepsilon(n)}
    \VN
    \end{align*}
 where
 $ \overrightarrow{\sigma} = ( \sigma_1,  \sigma_2, \dots, \sigma_{n}) $,
 and
   \begin{align}\label{eq:v:w:n}
   \VN  = \Wg_N(p, q)
   \cdot\frac{1}{N}
   | \AN|,
   \end{align}
   and 
   $ \AN $  
   is  the set 
   \begin{align*}
   \big\{
   (i_1, j_1, \dots,& i_{n}, j_{n}) \in [ N ]^{2n}: 
   \ i_1= j_{n}, i_{s+1} = j_s
   \textrm{ for each } s \in [ {n} -1], \\
    &   \textrm { and }
     k_s = k_{p(s)}, l_s = l_{ q ( s ) } , \textrm { for each } s \in [ {n} ] \big\}.
   \end{align*} 
   For 
    $ S $
     a subset of 
      $ [ m ]$,
       we denote
    \begin{align*}
    \mathcal{F}_{ \overrightarrow{\sigma}, \varepsilon, N }^{ ( p, q)} (S)
    =|\big\{
    (i_s,j_s)_{s\in S}; \mathrm{\ there\ exists\ } & (i_{r},j_{r})_{r\notin S} \mbox{ such that }\\
    & (i_1,j_1,\ldots,i_{m},j_{m})\in \AN
    \big\}
    |.
    \end{align*} 
    We showed in \cite{mpsz1} the result below.
    
    \begin{lemma}\label{lemma:3.2.4}
   If there exists some 
   $ S \subseteq [ n ] $ 
   interval, i. e. 
   		  $ S =\{t+1, t+2, \dots, t+r\} $  
 for some
      $ r>0$,  		  
  such that 
   		  \begin{align*}
   	\mathcal{F}_{ \overrightarrow{\sigma}, \varepsilon, N}^{ ( p, q)} (S) = o\big( N^{  | S | + 1  -|\{B:\  B  \textrm{ block in } p\vee q   \textrm{ and }
   		  			B \subseteq S  \}|} \big) 
   		  \end{align*}
   		  then
   		  $ \displaystyle \VN = o(N^0) = o(1) $.  
    \end{lemma}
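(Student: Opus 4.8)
The plan is to combine the order estimate for the Weingarten weight with a factorization of the cardinality $|\AN|$ across the interval $S$. Throughout, write $k_T := |\{B : B \text{ is a block of } p \vee q \text{ with } B \subseteq T\}|$ for $T \subseteq [n]$, and abbreviate $\mathcal{F}(T) := \mathcal{F}^{(p,q)}_{\overrightarrow{\sigma}, \varepsilon, N}(T)$. First I would reduce the claim to a purely enumerative estimate. By the pairing lemma preceding Notation \ref{not:21} we have $2\#(p\vee q) = \#(pq)$, so the Weingarten order estimate (\ref{eq:weingarten_order_pairing}) gives $\Wg_N(p,q) = O(N^{\#(p\vee q) - n})$. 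Since $\VN = \Wg_N(p,q)\cdot \frac1N|\AN|$, it therefore suffices to prove
\[
|\AN| = o\big(N^{\,n + 1 - \#(p\vee q)}\big).
\]

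Next I would exploit that $S = \{t+1, \dots, t+r\}$ is an interval. The underlying free variables of $\AN$ are the indices $i_1, \dots, i_n$ (with $j_s = i_{s+1}$ and $i_{n+1} = i_1$), and the positions $s \in S$ involve exactly the indices $i_{t+1}, \dots, i_{t+r+1}$; these meet the positions of $S^c$ only in the two boundary indices $i_{t+1}$ and $i_{t+r+1}$. Restricting an element of $\AN$ to its $S$-block is a map into the set of $\mathcal{F}(S)$ admissible $S$-configurations, and once such a configuration (in particular the two boundary indices) is fixed, completing it to an element of $\AN$ amounts to choosing the $|S^c| - 1$ remaining interior indices of $S^c$ subject to the constraints $k_s = k_{p(s)},\, l_s = l_{q(s)}$ for $s \in S^c$. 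The heart of the argument is the a priori bound
\[
\#\{\,S^c\text{-completions of a fixed }S\text{-configuration}\,\} = O\big(N^{\,|S^c| - k_{S^c} - k_{\mathrm{cross}}}\big),
\]
where $k_{\mathrm{cross}}$ is the number of blocks of $p\vee q$ meeting both $S$ and $S^c$; this is the analogue, relative to the two fixed boundary indices, of the global top-order estimate $|\AN| = O(N^{\,n+1-\#(p\vee q)})$, with the ``$-1$'' absorbed by the boundary. Granting it and summing over $S$-configurations yields $|\AN| \le \mathcal{F}(S)\cdot O(N^{\,|S^c| - k_{S^c} - k_{\mathrm{cross}}})$.

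Finally I would insert the hypothesis $\mathcal{F}(S) = o(N^{\,|S| + 1 - k_S})$ and check the exponents: since $\#(p\vee q) = k_S + k_{S^c} + k_{\mathrm{cross}}$ and $|S| + |S^c| = n$, the product is
\[
o\big(N^{(|S|+1-k_S) + (|S^c| - k_{S^c} - k_{\mathrm{cross}})}\big) = o\big(N^{\,n + 1 - \#(p\vee q)}\big),
\]
exactly the bound demanded in the first step. The main obstacle is the displayed a priori estimate on the number of $S^c$-completions: one must show that the blocks of $p\vee q$ contained in $S^c$, together with those crossing the boundary (whose $S$-ends are already pinned down by the fixed configuration), cut at least $k_{S^c} + k_{\mathrm{cross}} - 1$ independent degrees of freedom out of the $|S^c|-1$ interior indices, uniformly in the fixed boundary values and in the entry permutations $\sigma_s$. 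It is precisely here that the interval hypothesis is indispensable, since it guarantees that only two indices are shared across the boundary, so that the crossing blocks each genuinely remove a degree of freedom rather than merely coupling the two sides.
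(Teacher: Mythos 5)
Your reduction is correct and is surely the right skeleton: since $\#(pq)=2\#(p\vee q)$, the Weingarten estimate gives $\Wg_N(p,q)=O(N^{-n+\#(p\vee q)})$, so by (\ref{eq:v:w:n}) the claim reduces to $|\AN|=o(N^{n+1-\#(p\vee q)})$; the factorization $|\AN|\le \mathcal{F}^{(p,q)}_{\overrightarrow{\sigma},\varepsilon,N}(S)\cdot C(S)$, with $C(S)$ the maximal number of completions of an admissible $S$-configuration, is legitimate precisely because $S$ is an interval and so only the two boundary indices are shared; and your exponent bookkeeping via $\#(p\vee q)=k_S+k_{S^c}+k_{\mathrm{cross}}$ is right. (Note that the paper itself offers no proof to compare against: the lemma is imported from \cite{mpsz1}.)

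The gap is that the completion bound $C(S)=O(N^{|S^c|-k_{S^c}-k_{\mathrm{cross}}})$, which carries the entire technical weight of the lemma, is asserted, flagged by you as ``the main obstacle'', and not proved. The heuristic you give for it --- each block inside $S^c$ and each crossing block kills one degree of freedom --- does not go through naively for arbitrary entry permutations. The constraints defining $\AN$ are $k_s=k_{p(s)}$ and $l_s=l_{q(s)}$ with $(k_s,l_s)=\sigma_s\circ\varepsilon_s(i_s,i_{s+1})$, and a single such constraint anchored in the fixed region need not cut anything: if the first coordinate of $\sigma_s\circ\varepsilon_s(i_s,i_{s+1})$ happens to depend only on the index that is already pinned down, the equation $k_s=\mathrm{const}$ is vacuous or impossible and removes no degree of freedom, even though the block is ``crossing''. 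To make the count work one must use the $p$- and $q$-constraints of a block jointly (exploiting that $(i_s,j_s)\mapsto(k_s,l_s)$ is a bijection of $[N]^2$) and traverse the positions of $S^c$ in a suitable order so as to exhibit $k_{S^c}+k_{\mathrm{cross}}-1$ genuinely independent constraints, uniformly over the fixed boundary data and over the $\sigma_s$. That combinatorial argument is the actual substance of the result established in \cite{mpsz1}; as written, your proof reduces the lemma to an unproved claim of essentially the same depth.
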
 
 Moreover, as shown in \cite{mpsz1} (Proposition 3.1),   
  $ \VN $
  has the following properties:
  \begin{enumerate}
  \item[($\mathfrak{v}1$)] $ \VN = O(N^0) = O(1) $
  for any
   $ p, q \in P_2^{\varepsilon}(n) $.
   \item[($\mathfrak{v}2$)] If
   $ p \vee q $ 
   is crossing, then
   $ \VN = O(N^{-1})$.
  \item[($\mathfrak{v}3$)] Suppose that 
  $ B =  \{a_1, a_2, \dots, a_r  \} $ is a block of 
  $ p \vee q $,
  with
  $ a_1 < a_2 < \cdots < a_r $. 
  Then either 
  $ \VN = O(N^{-1})$ 
  or $ r $ is even, for each 
  $ s \in [ r ] $
  we have that
  $ \varepsilon_{a_s} \neq \varepsilon_{a_{s+1}} $
  and
  $ \{ p(a_s), q(a_s) \} = \{ a_{  s - 1 }, a_{ s + 1 } \}$.
  
  Denoting	$ \widetilde{p_{k}}$ 
      	and
      	$ \widetilde{q_{k}} $
      	are pairings on 
      	$ [ 2k ]$ 
      	given by  
      	$ \widetilde{p_{k}} (2l) = 2l + 1 ( \textrm{mod }  2k )$
      	respectively
      	$  \widetilde{q_{k}} ( 2 l ) =
      	 2l -1 ( \textrm{mod } 2k ) $,
  the last condition is equivalent to the pair
  $( p_{| B}, q_{| B}) $
   of restrictions to $ B $  of $ p $,
  respectively $ q $, is either
  	$( \widetilde{p_{\frac{r}{2}}}, \widetilde{q_{\frac{r}{2}}}) $
  	or
  	$(  \widetilde{q_{\frac{r}{2}}},
  	\widetilde{p_{\frac{r}{2}}})$.
\item[($\mathfrak{v}4$)] If
$ \sigma_1 = \sigma_2 = \cdots = \sigma_n = \textrm{Id} $, 
then 
$ U $ is $ R $-diagonal for each $ N $ and the free cumulants are given by
  	\begin{align*}
  	\kappa_{2r}(U_N, U_N^\ast, \dots, U_N, U_N^\ast)
  	 = (-1)^{r-1} \mathit{Cat}_{ r -1}
  &	=\lim_{ N \rightarrow \infty}
   \mathcal{V}_{\overrightarrow{\sigma}, \varepsilon^\prime, N} 
  ( \widetilde{p_{r}}, \widetilde{q_{r}} )
 \\
  \kappa_{2r}( U_N^\ast, U_N, \dots, U^\ast_N, U_N)
  = (-1)^{r-1} \mathit{Cat}_{ r -1}
  &=\lim_{N \rightarrow \infty }
   \mathcal{V}_{ \overrightarrow{\sigma}, \varepsilon^{\prime\prime}, N} 
    ( \widetilde{q_{r}},  \widetilde{p_{r}} )
  \end{align*}
where 
$ \varepsilon^{\prime}, \varepsilon^{\prime\prime}:[2r] \rightarrow \{ 1, \ast \} $
are given by
$ \varepsilon^\prime( 2s ) = \varepsilon^{\prime\prime}(2s-1) = \ast $
and
$ \varepsilon^\prime( 2s - 1 ) = \varepsilon^{\prime\prime}(2s) = 1 $
for each
$ s\in [ r ] $.
   \end{enumerate}
   
\begin{remark}\label{pi:alt}
Denote by 
 $ NC_{\varepsilon, alt} ( n ) $
  the (possibly void)  set of partitions 
 $ \pi $
 on 
 $ [ n ] $
 such that
 \begin{itemize}
 \item[$\bullet$] $ \pi $  is non-crossing 
 \item[$\bullet$] if 
   $\{ a_1, a_2, \dots, a_r \}$
     is a block of 
    $ \pi $ 
    with
     	$ a_1 < a_2 < \dots < a_r $,
  then
   $ r $
   is even and
  $ \varepsilon_1 \neq 
  \varepsilon_2 \neq \dots\neq \varepsilon_r $.
 \end{itemize}
 An immediate consequence of the properties
 ($\mathfrak{v}1$) -- ($\mathfrak{v}3$) 
 from above is the following.
 \begin{align*}
 \E \circ \tr 
     \big( ( U^{ \sigma_1})^{ \varepsilon_1} &
   ( U^{ \sigma_2})^{ \varepsilon_2}
   \cdots 
   ( U^{ \sigma_n})^{  \varepsilon_n }
     \big)
    = \sum_{ \pi \in NC_{\varepsilon, alt}(n)} 
  V_{ \overrightarrow{\sigma}, \varepsilon, N} (\pi) + O(N^{-1}),
 \end{align*}
 where
 $ \displaystyle V_{ \overrightarrow{\sigma}, \varepsilon, N} ( \pi) 
 = \sum_{ \substack{p, q \in P_2^{\varepsilon}(n)\\ p \vee q = \pi }}
  \VN $.
\end{remark}   
 
 \subsection{Main results}

  Within this section, for $ p$ a positive integer, 
  $ U_p $
  will denote a 
  $ p \times p $ 
  Haar unitary random matrix.
  
  Suppose now that 
   $ r $ 
   is a positive integer
   and, for each
   $ i \in [ r ]$, 
   we have that 
    $ \vartheta_i \in \{ -1, 1\} $
  and
  $ ( b_{i, N})_N $,
  $  ( d_{i, N})_N $
  are two non-decreasing sequences of positive integers such that for every
    $ i, i^\prime \in [ r ] $,
  \[
   b_{i, N} d_{i, N} = b_{i^\prime, N } d_{i^\prime, N } = M_N
   \]
 for some strictly increasing sequence
   $ (M_N)_N $.
   
   To simplify the notations, for the rest of the section we will omit the subscript 
   $ N $,
    i.e. we shall write
    $ M, b_s, d_s, \sigma_s $
     for 
   $M_N, b_{s, N}, d_{s, N},  \sigma_{s, N} $.
  
  \begin{proposition}\label{thm:24}
   If
   $ s, t \in [ r ] $ 
   are such that
   \begin{align*}
   \liminf_{N \rightarrow \infty }
   \frac{1}{M^2}
   | \big\{  (i, j) \in [ M ]^2: \ 
   ( \Gamma_{b_s, d_s}^{ (\vartheta_s)})^{ -1}
   \circ
   \Gamma_{b_t, d_t}^{ (\vartheta_t )}
   (i, j) = (i, j)
    \big\} | > 0
   \end{align*}
   then
   $ U_M^{\Gamma_{b_s, d_s}^{ (\vartheta_s)}} $
   and
   $U_M^{ \Gamma_{b_t, d_t}^{ (\vartheta_t)} } $
   are \textbf{not} asymptotically free.
   \end{proposition}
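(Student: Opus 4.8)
The plan is to contradict asymptotic freeness directly at the level of the mixed second $*$-moment, namely the covariance $\E\circ\tr\big(U_M^{\sigma_s}(U_M^{\sigma_t})^*\big)$, where for brevity I write $\sigma_s=\Gamma_{b_s,d_s}^{(\vartheta_s)}$ and $\sigma_t=\Gamma_{b_t,d_t}^{(\vartheta_t)}$. First I would record that each permuted matrix is centered: since $\E([U_M]_{kl})=0$ for every entry of a Haar unitary, we have $\E\circ\tr(U_M^{\sigma_s})=\frac1M\sum_i \E([U_M]_{\sigma_s(i,i)})=0$, and likewise for $\sigma_t$. Hence, writing $x,y$ for the limits (in any limiting tracial $*$-distribution, with trace $\tau$) of $U_M^{\sigma_s}$ and $U_M^{\sigma_t}$, we have $\tau(x)=\tau(y)=0$; so if $x$ and $y$ were free we would necessarily have $\tau(xy^*)=\tau(x)\tau(y^*)=0$, since for free elements the trace of a product factorizes.

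Next I would compute the covariance exactly. Writing out the normalized trace,
\[
\E\circ\tr\big(U_M^{\sigma_s}(U_M^{\sigma_t})^*\big)=\frac1M\sum_{i,j=1}^M \E\big([U_M]_{\sigma_s(i,j)}\,\overline{[U_M]_{\sigma_t(i,j)}}\big).
\]
Applying the Weingarten formula \eqref{eq:weingarten_symmetric} with $n=1$, where the only permutation is the identity and $\Wg_M(\mathrm{id})=M^{-1}$, gives $\E([U_M]_{ab}\overline{[U_M]_{cd}})=M^{-1}\delta_{ac}\delta_{bd}$, so each summand equals $M^{-1}$ exactly when $\sigma_s(i,j)=\sigma_t(i,j)$ and is $0$ otherwise. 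Therefore
\[
\E\circ\tr\big(U_M^{\sigma_s}(U_M^{\sigma_t})^*\big)=\frac1{M^2}\big|\{(i,j)\in[M]^2:\sigma_s(i,j)=\sigma_t(i,j)\}\big|.
\]
I would then observe that $\sigma_s(i,j)=\sigma_t(i,j)$ is equivalent to $(\sigma_s^{-1}\circ\sigma_t)(i,j)=(i,j)$, so the right-hand side is precisely the quantity appearing in the hypothesis.

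Combining the two computations finishes the argument. The hypothesis says this covariance has strictly positive $\liminf$ as $N\to\infty$, whereas asymptotic $*$-freeness of $U_M^{\sigma_s}$ and $U_M^{\sigma_t}$ would force the mixed moment $\E\circ\tr\big(U_M^{\sigma_s}(U_M^{\sigma_t})^*\big)$ to converge to $\tau(xy^*)=0$ by the centering above. This contradiction shows the two families cannot be asymptotically free. The only points requiring care — and the closest thing to an obstacle — are bookkeeping rather than analysis: confirming that the centering holds for \emph{every} $N$, so that no subsequential subtlety arises with the $\liminf$, and matching the normalization of $\tr$ so that the $M^{-1}$ coming from Weingarten and the $M^{-1}$ coming from the normalized trace combine to the stated $M^{-2}$. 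There is no heavy estimate involved; the entire force of the statement is that a positive-density coincidence set of the two entry permutations produces a non-vanishing free covariance.
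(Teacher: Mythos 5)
Your proposal is correct and follows essentially the same route as the paper: both reduce the claim to the single mixed moment $\E\circ\tr\big(U_M^{\sigma_s}(U_M^{\sigma_t})^*\big)$, evaluate it via the $n=1$ Weingarten formula as $M^{-2}$ times the size of the coincidence set of the two entry permutations, and conclude that its positive $\liminf$ contradicts the vanishing forced by freeness of centered elements. Your write-up is if anything slightly more careful than the paper's (which states the computation tersely and with a couple of typographical slips), but there is no difference in substance.
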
  
     
     \begin{proof}
     Denote
      $ \tau_s = \Gamma_{b_s, d_s}^{ (\vartheta_s)} $
      and
      $ \tau_t = \Gamma_{b_t, d_t}^{ (\vartheta_t)}  $.
      
     It suffices to show that
     $ \displaystyle
     \lim_{N \rightarrow \infty} 
     \E \circ \tr
     \big( U_M^{\tau_s}
     ( U_M^{ \tau_t } )^\ast
     \big) \neq 0 $.
     And indeed
     \begin{align*}
     \lim_{N \rightarrow \infty} 
        \E \circ \tr
        \big( U_M^{\tau_s} 
        ( U_M^{ \tau_t } )^\ast &
        \big) 
        \geq
       \lim\inf_{ N \rightarrow \infty}
        \sum_{i_1, j =1}^M 
        \E \big(
        u_{ \tau_1 (i, j)} 
        \overline{ u_{ \tau_2 (i, j)}} 
        \big)\\
        = &
  \liminf_{N \rightarrow \infty}
    \sum_{i_1, j =1}^M \frac{1}{M^2} \delta_{ \tau_1(i, j)}^{ \tau_2(i, j)} >0 .   
     \end{align*}
     \end{proof}

  \begin{theorem}\label{thm:25}
 Denote
  $ \tau_s = \Gamma_{ b_s, d_s}^{( \vartheta_s )}$
  for
  $ s =1, 2, \dots, n $.
  Then the family
  $ U_M^{ \tau_1}, \dots, U_M^{ \tau_n } $
  is asymptotically free if and only if  for any
  $ s \neq t $
  we have that
  $ \Gamma_{b_s, d_s}^{( \vartheta_s)} $
  and
  $ \Gamma_{b_t, d_t}^{(\vartheta_t )} $
  satisfy the condition
\begin{equation}
  \lim_{N \rightarrow \infty }
  \frac{1}{M_N^2}
         | \big\{  (i, j) \in [ M_N ]^2 \mid  
\Gamma_{b_N, d_N}^{ (\vartheta)}(i, j) =
           \Gamma_{b^\prime_N, d^\prime_N}^{ (\vartheta^\prime)}
           (i, j) 
            \big\} |  = 0. \tag{\ref{condi:19}}
\end{equation}
\end{theorem}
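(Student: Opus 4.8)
The plan is to prove the two implications separately: the forward (``only if'') direction is essentially packaged by Proposition \ref{thm:24}, while the reverse (``if'') direction is carried by the Weingarten expansion of Remark \ref{pi:alt} together with the counting estimates of Lemma \ref{lemma:gamma} and the vanishing criterion of Lemma \ref{lemma:3.2.4}.

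\emph{Only if.} A one--line computation using $\E(u_{ab}\overline{u_{cd}})=M^{-1}\delta_{ac}\delta_{bd}$ gives, for $s\neq t$,
\[
\E\circ\tr\big(U_M^{\tau_s}(U_M^{\tau_t})^\ast\big)=\frac{1}{M^2}\,\big|\{(i,j)\in[M]^2:\tau_s(i,j)=\tau_t(i,j)\}\big|.
\]
Since each $U_M^{\tau_s}$ is centered ($\E\circ\tr(U_M^{\tau_s})=0$ for every $N$), if the family were asymptotically free the left--hand side would converge to $\phi(U^{\tau_s})\phi((U^{\tau_t})^\ast)=0$; hence the right--hand side tends to $0$, which is \eqref{condi:19}. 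This is exactly the contrapositive of Proposition \ref{thm:24}, which I would simply cite.

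\emph{If.} Assume \eqref{condi:19} for every pair $s\neq t$. Each $U_M^{\tau_s}$ has a limiting $\ast$--distribution (the single--colour case), so the free product target is well defined and it suffices to verify the freeness factorization of mixed moments. Fix a $\ast$--word $(U_M^{\tau_{c(1)}})^{\varepsilon_1}\cdots(U_M^{\tau_{c(m)}})^{\varepsilon_m}$, where $c:[m]\to[n]$ records the colour at each position, and expand by Remark \ref{pi:alt}:
\[
\E\circ\tr\big((U_M^{\tau_{c(1)}})^{\varepsilon_1}\cdots(U_M^{\tau_{c(m)}})^{\varepsilon_m}\big)=\sum_{\pi\in NC_{\varepsilon,alt}(m)}V_{\overrightarrow{\tau},\varepsilon,N}(\pi)+O(N^{-1}),
\]
with $\overrightarrow{\tau}=(\tau_{c(1)},\dots,\tau_{c(m)})$. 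Call $\pi$ \emph{monochromatic} if every block joins positions of a single colour. The claim is that $\lim_N V_{\overrightarrow{\tau},\varepsilon,N}(\pi)=0$ for every non--monochromatic $\pi$, while the monochromatic $\pi$ reproduce the free moment.

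The vanishing is the crux. Given a non--monochromatic $\pi$, choose a colour--mixing block $B$ that is innermost, so all blocks nested strictly inside $B$ are monochromatic. Removing those nested blocks one at a time — each being, at some stage, a monochromatic interval block whose removal factors out a bounded single--transpose moment and merges the two neighbouring positions, exactly as in the recursion of Theorem \ref{thm:free:brown} — turns $B$ into an interval block $S$ containing adjacent positions $u,u+1$ with $c(u)\neq c(u+1)$. By property $(\mathfrak{v}3)$ these are paired under $p$ or $q$, so the constraints $k_u=k_{p(u)}$, $l_u=l_{q(u)}$ defining $\mathcal{A}^{(p,q)}_{\overrightarrow{\tau},\varepsilon,N}$ become, at this junction, a (possibly projected) cross--coincidence $\tau_{c(u)}\circ\varepsilon_u(i_u,i_{u+1})=\tau_{c(u+1)}\circ\varepsilon_{u+1}(i_{u+1},i_{u+2})$ sharing the trace index $i_{u+1}$. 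This is precisely one of the sets counted in Lemma \ref{lemma:gamma}: part $(i)$ when $\vartheta_{c(u)}=\vartheta_{c(u+1)}$, and part $(ii)$ with estimates $(a)$, $(b)$ when $\vartheta_{c(u)}\neq\vartheta_{c(u+1)}$. In either case \eqref{condi:19} forces $\mathcal{F}^{(p,q)}_{\overrightarrow{\tau},\varepsilon,N}(S)=o\big(N^{\,|S|+1-|\{B':B'\text{ a block of }p\vee q,\ B'\subseteq S\}|}\big)$, so Lemma \ref{lemma:3.2.4} yields $V_{\overrightarrow{\tau},\varepsilon,N}(\pi)=o(1)$. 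For monochromatic $\pi$, by contrast, $V_{\overrightarrow{\tau},\varepsilon,N}(\pi)$ factors over its blocks (up to $O(N^{-1})$), each block contributing in the limit the free $\ast$--cumulant of the single variable $U^{\tau_c}$ via $(\mathfrak{v}3)$–$(\mathfrak{v}4)$; summing over all monochromatic $\pi\in NC_{\varepsilon,alt}(m)$ reproduces the non--crossing moment--cumulant expansion of a free family, so the limiting joint $\ast$--distribution is the free product of the individual limits, i.e. asymptotic freeness.

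The step I expect to be the main obstacle is the vanishing estimate: for an arbitrary non--monochromatic non--crossing $\pi$ one must locate the interval $S$ above and verify that the cross--coincidence it produces has \emph{exactly} the form tabulated in Lemma \ref{lemma:gamma}, so that \eqref{condi:19} meets the threshold exponent of Lemma \ref{lemma:3.2.4}. Reconciling this in both the $\vartheta_{c(u)}=\vartheta_{c(u+1)}$ and $\vartheta_{c(u)}\neq\vartheta_{c(u+1)}$ cases — which draw on different parts of Lemma \ref{lemma:gamma} — while respecting the alternation $\varepsilon_u\neq\varepsilon_{u+1}$ coming from $NC_{\varepsilon,alt}$ is the delicate bookkeeping at the heart of the proof.
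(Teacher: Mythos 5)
Your proposal is correct and follows essentially the same route as the paper: the ``only if'' direction is exactly Proposition \ref{thm:24}, and the ``if'' direction proceeds by the expansion of Remark \ref{pi:alt}, a recursion that removes interval blocks of $\pi$, the vanishing of colour-mixing blocks via the counting sets of Lemma \ref{lemma:gamma} fed into Lemma \ref{lemma:3.2.4}, and the factorization of monochromatic blocks into single-transpose cumulants via ($\mathfrak{v}3$)--($\mathfrak{v}4$) and Theorem \ref{thm:16}. The only cosmetic difference is that you peel inward toward an innermost colour-mixing block, whereas the paper removes an arbitrary interval block and splits into the monochromatic/mixed dichotomy at each stage; the bookkeeping you flag as the main obstacle is precisely the computation the paper carries out with conditions (\ref{cond:0:1})--(\ref{cond:0:2}).
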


\begin{proof}

Let 
$  m $ 
be a positive integer, and, for each
$ k \in [ m ] $,
let
$ \varepsilon_k \in \{ 1, \ast \} $
and let 
$ \big( \sigma_{k, N} \big)_N  \in  \big\{ 
\big(  \Gamma_{b_{ s, N}, d_{ s, N } }^{( \vartheta_s)}   \big)_N : s \in [ r ] \big\} $.
With the notations from above, it suffices to show that in the expansion of
\[
   \E \circ \tr 
   \big( ( U_{M_N}^{ \sigma_{1, N}})^{ \varepsilon_1} 
  ( U_{M_N}^{ \sigma_{2, N}})^{ \varepsilon_2}
  \cdots 
  ( U_{M_N}^{ \sigma_{m, N}})^{  \varepsilon_m }
   \big)
 \]
all mixed free cumulants cancel asymptotically.

Fix 
$ \pi \in NC_{\varepsilon, \textrm{alt}} ( m ) $.
  Since 
  $ \pi $ 
  is non-crossing, it has a block which is also a segment, say
  $ ( t + 1, t+2, \dots, t + 2r )$. 
  Via a circular permutation, we can further suppose that 
  $ t =0 $,
  that is $ (1, 2, \dots, 2r) $ is a block in 
  $ \pi $.
   Denote by
  $ \varepsilon^\prime$,
  respectively 
  $ \varepsilon^{ \prime \prime } $     
   the restrictions of 
  $ \varepsilon $
  to the sets
  $ [ 2 r ] $,
  respectively 
  $ [ m ] \setminus [ 2r ] $.
  Similarly, denote
   $ \overrightarrow{\sigma}^\prime = 
   ( \sigma_1, \dots, \sigma_{ 2 r } ) $
  and
  $ \overrightarrow{\sigma}^{ \prime \prime } =      
       ( \sigma_{2r+1}, \sigma_{2r+2}, \dots, \sigma_m )$.
 With these notations,  
$ \pi = [ 2 r ] \oplus \pi^{\prime \prime }$,
for some 
$ \pi^{ \prime \prime } \in NC_{\varepsilon^{ \prime \prime }, \textrm{alt}} ( m  - 2 r ) $. 
Henceforth, for the conclusion to follow from Remark \ref{pi:alt}, it suffices to show that
 \begin{align}\label{v:k:v}
V_{ \overrightarrow{\sigma}, \varepsilon, M} (\pi)  =  
\mathcal{K}(r, \overrightarrow{\sigma}^\prime, \eta^{\prime}) 
 \cdot
V_{ \overrightarrow{\sigma}^{ \prime \prime }, \varepsilon^{ \prime \prime }, M} ( \pi^{ \prime \prime } )
  +  o(M^0), 
  \end{align}
where
\begin{align*}
\mathcal{K}(r, \overrightarrow{\sigma}^\prime, \eta^{\prime})=
\left\{
\begin{array}{l}  \displaystyle \lim_{ N \rightarrow \infty } 
\kappa_{2r}  \big( 
( U_M^{\sigma_1} )^{\varepsilon_1} ,
\dots, 
( U_M^{\sigma_{1} } )^{\varepsilon_{2r}}
\big), \textrm{ if } \sigma_1 = \dots = \sigma_{2r}  \\
\\
0, \textrm{ otherwise.}
\end{array}
\right.
\end{align*}
 
 We can also assume that 
 $ \varepsilon_{ t + s } \neq \varepsilon_{ t + s + 1} $
for all
$ s \in \{ 1, \dots, 2r -1 \}$.
Otherwise, as 
$ N \rightarrow \infty $,
 the right-hand side of (\ref{v:k:v}) vanishes 
  according to property ( $\mathfrak{v}3$) while the 
left-hand side vanished according to Theorem \ref{thm:16} and property 
($\mathfrak{v}4$), so the equality holds true.

Let
$p, q \in P_2^{\varepsilon}(m) $
be such that
$ p \vee q = \pi $.
 If 
  $ \sigma_{ s } \neq \sigma_{ s + 1 } $
for some 
   $ s \in \{ 1, 2, \dots, 2r-1\} $,
then  property ($\mathfrak{v}3$) gives that either  
$ \displaystyle \lim_{N \rightarrow \infty }
    V_{ \overrightarrow{\sigma}, \varepsilon, M} (\pi) = 0$
(in particular (\ref{v:k:v}) holds true), or    
$s+1 \in \{ p(s), q(s) \} $. 
Suppose first that 
 $ s + 1 = p(s)$.
 If 
 $ r =1  $, 
then condition (\ref{condi:19}) gives that
    \begin{multline*}
    \mathcal{F}_{\overrightarrow{\sigma}, \varepsilon, M }^{(p, q)} \big( \{ s,  s + 1 \} \big) \\
      \leq 
      |
        \big\{ (i_1, i_2, j) \in [ M ]^3 \mid
        \tau_1 (i_1, j) = \tau_2 ( i_2, j) 
         \big\}
          |
        = o(M^2).
    \end{multline*}
    If 
    $ r > 1 $,
     then again (\ref{condi:19}) gives that 
     \begin{align*}
  & \mathcal{F}_{\overrightarrow{\sigma}, \varepsilon, M }^{(p, q)} 
  \big( \{ s,  s + 1 \} \big) 
  \leq
  |
  \big\{ (i_1, j_1, i_2, j_2) \in [ M ]^4 :\ 
  i_2 = j_1 \textrm{ and }\\
  & \hspace{4cm}
   \pi_1 \circ \tau_1 ( i_1, j_1) =
  \pi_1 \circ \tau_2 (j_2, i_2) 
  \big\}
  |\\
  & \leq
  |
  \big\{
  (i_1, i_2, j) \in [ M ]^3:\
  \pi_1 \circ \tau_1 ( i_1, j) =
  \pi_1 \circ \tau_2 (i_2, j)
  \big\}
  | 
  =o(M^2).
     \end{align*}
 In both situations we have that
 $
 \displaystyle
 \lim_{N \rightarrow\infty}
    \mathcal{V}_{ \overrightarrow{\sigma}, \varepsilon, M} ( p, q ) =0,
 $  
 according to Lemma \ref{lemma:3.2.4}.
 The argument for the case 
 $ q ( s ) = s + 1  $
 is similar.

   The rest of the proof, that is the case 
$ \sigma_1 = \dots = \sigma_{2r} $ ,   
    follows the argument for Example 5.3 from \cite{mpsz1}, as shown below.

Let
      $ \Gamma_{b, d}^{( \vartheta)} $
      the common value of 
            $ \sigma_1, \sigma_2,\dots, \sigma_{2r} $
 (here we use again the convention of omitting the index 
 $ N $, 
 that is 
 $ \sigma_j$'s, $ b $ and $ d $
  are depending on $ N $).
 since 
 $ \Gamma_{b, d}^{(-1)} = \big( \Gamma_{b, d}^{ ( 1 )} \big)^t $,
 it suffices to show relation (\ref{v:k:v}) for 
 $ \vartheta = 1 $
 and the case
 $ \vartheta = -1 $
 follows by taking transposes.
 
For 
            each
            $ s \in [ 2 r ] $,
            write
            \begin{align*}
            i_s = ( \alpha_s -1) d + \beta_s 
            \end{align*}
            with
            $ \alpha_s \in [ b ] $
            and 
            $ \beta_s \in [ d ] $.
Assuming that 
$ \vartheta = 1 $,
suppose first that 
$ \eta_1 = 1 $;
then, for
$ s \in [ 2 r ] $,   
 \begin{align*}
 (k_s, l_s) = \left\{
 \begin{array}{ll}
  \big( ( \alpha_s -1 ) d + \beta_{s + 1} , ( \alpha_{ s + 1} - 1 )d + \beta_s \big)&
  \textrm{ if $ s $ is odd }\\
  \big( (\alpha_{ s + 1} - 1 ) d + \beta_s ), ( \alpha_s -1)d + \beta_{s+1}
  \big) & 
\textrm{ if $ s $ is even }.
\end{array}
\right.
\end{align*} 
 
 From  property ($\mathfrak{v}3$),  we have that 
 $ \displaystyle \lim_{ N \rightarrow \infty} 
\mathcal{V}_{ \overrightarrow{\sigma}, \varepsilon, M} ( p, q ) =0$
 unless the pair
$  \big( p^{\prime},q^{\prime}\big) $
is either
 $ \big( \widetilde{p_{r}} , \widetilde{q_r} \big) $
       or
$ \big(  \widetilde{q_r}, \widetilde{p_{r}}  \big) $. 
  If      
      $ p^{ \prime } = \widetilde{p_{r}} $ 
    and
    $ q^{ \prime } = \widetilde{q_{r}}$,
    then the conditions
    $ k_s = k_{ p(s) } $ 
    and 
    $ l_s = l_{ q (s ) } $
    become
     \begin{align}\label{cond:0:1}
  \left\{
  \begin{array}{l l}
  \alpha_1 = \alpha_{ 2r + 1} \\
  \beta_{s} = \beta_{ s + 2 } \textrm{ for each } s = 1, 2, \dots,  2 r -1, 
  \end{array} 
  \right.
  \end{align}
while if
  $ p^{\prime} = \widetilde{q_r } $
 and
 $ q^{ \prime  }  = \widetilde{ p_r } $
 then the conditions
 $ k_s = k_{ p(s) } $ 
 and 
 $ l_s = l_{ q (s ) } $
 become
     \begin{align}\label{cond:0:2}
 \left\{
 \begin{array}{l l}
 \beta_1 = \beta_{ 2r + 1} \\
 \alpha_{s} = \alpha_{ s + 2 } \textrm{ for each } s = 1, 2, \dots,  2 r -1.
 \end{array} 
 \right.
 \end{align} 
 
  Either way, we obtain that 
 $\alpha_1 = \alpha_{2r + 1} $ 
 and
 $ \beta_1 = \beta_{2r + 1} $, 
 that is
 $ i_1 = i_{2r + 1 } $, 
 so property ($\mathfrak{v}1$) and the multiplicativity of the leading term in the development of  the unitary Weingarten function in (\ref{eq:weingarten_order})  gives that  
 \begin{align*}
 V_{ \overrightarrow{\sigma}, \varepsilon, M} (\pi)  =  
 V_{ \overrightarrow{\sigma}^{ \prime}, \varepsilon^{ \prime }, M}
  (\pi^{ \prime })  
 \cdot
 V_{ \overrightarrow{\sigma}^{ \prime \prime}, \varepsilon^{ \prime \prime}, M} (\pi^{ \prime \prime} )  
 + o(M^0), 
 \end{align*}       
 so it suffices to show that
\begin{align}\label{k:v:v}
 \lim_{ N \rightarrow \infty } 
\kappa_{2r}  \big( 
( U_M^{\Gamma_{b, d}^{(1)} } )^{\varepsilon_1} , &
( U_M^{\Gamma_{b, d}^{(1)} } )^{\varepsilon_2},
\dots, 
( U_M^{\Gamma_{b, d}^{(1)} } )^{\varepsilon_{2r}}
\big)
= \lim_{ N \rightarrow \infty } V_{ \overrightarrow{\sigma}^{ \prime}, \varepsilon^{ \prime }, M}
  (\pi^{ \prime }) \\
 & = \sum_{ ( p^\prime, q^\prime ) } \lim_{ N \rightarrow \infty} 
   \mathcal{V}_{ \overrightarrow{\sigma}^{ \prime}, \varepsilon^{ \prime }, M}
    (p^{ \prime }, q^\prime ) \nonumber
\end{align} 
where the last summation is done over 
$ (p^\prime, q^\prime ) \in \big\{
 ( \widetilde{p_{r}} , \widetilde{q_r} )
 ,
(  \widetilde{q_r}, \widetilde{p_{r}}  ) 
\big\} $.

For 
$ r =1 $, 
we have that 
$ \widetilde{p_{r}} = \widetilde{q_r} = (1, 2) $
so the summation in the right-hand side of (\ref{k:v:v}) has just one term which, according to property ($\mathfrak{v}4$) equals 
$1 $.
On the other hand, Theorem  \ref{thm:16} of Section 3 gives that 
the left-hand side of (\ref{k:v:v}) equals the left-hand side of the first relation of property ($\mathfrak{v}4$), that is also 
$1$.

For 
$ r > 1 $, 
applying again Theorem  \ref{thm:16} of Section 3 and property 
($\mathfrak{v}4$), we have that the left-hand side of (\ref{k:v:v}) is given by
\begin{multline*}
\lim_{ N \rightarrow \infty} \kappa_{2r} \big( 
( U_M^{\Gamma_{b, d}^{(1)} } )^{\varepsilon_1} ,
 ( U_M^{\Gamma_{b, d}^{(1)} } )^{\varepsilon_2},
\dots, 
( U_M^{\Gamma_{b, d}^{(1)} } )^{\varepsilon_{2r}}
	  \big) \\
\mbox{} =    
 \begin{cases}
 0 & \textrm{ if }  b\rightarrow \infty \textrm{ and } d \rightarrow \infty \\
 b^{2-2r} (-1)^{r-1} \mathit{Cat}_{r-1}  & \textrm{ if }  b \textrm{ is bounded } \\
 d^{2-2r} (-1)^{ r -1} \mathit{Cat}_{r-1}  &
 \textrm{ if }   d \textrm{ is bounded. }
 \end{cases} 
\end{multline*}

On the other hand, the summation in the right hand side of (\ref{k:v:v}) has now tow terms, one for 
$( p^\prime , q^\prime ) = ( \widetilde{p_r}, \widetilde{q_r} ) $
and one for
$( p^\prime , q^\prime ) = (  \widetilde{q_r}, \widetilde{p_r} ) $.
Let us analyze the case
$ ( p^\prime, q^\prime) = ( \widetilde{ p_r}, \widetilde{ q_r }) $.
 Condition (\ref{cond:0:1}) gives that
   \[
      \mathcal{F}_{\overrightarrow{\sigma}, \varepsilon, M}^{ ( p, q )} ( [ 2 r ]) \leq b^{2r} d^{ r} = \frac{ M^{2r}}{ d^r}.
    \]
So, if
$ d \rightarrow \infty $,
Lemma \ref{lemma:3.2.4} gives that 
$ \displaystyle 
\lim_{ N \rightarrow \infty } \mathcal{V}_{ \overrightarrow{\sigma}, \varepsilon, M} ( p, q) = 0 $.    
If
$ d $
is bounded, since 
$ 2r $ 
is even, 
(\ref{cond:0:1}) 
gives that 
$ \alpha_1 = \alpha_{2 r +1} $,
therefore
\begin{align*}
i_1 = ( \alpha_1 -1)d + \beta_1 = 
( \alpha_{ 2r+1} -1) d + \beta_{2 r +1 } = i_{ 2r + 1}.
\end{align*}  

In particular, each 
 $ 2 (m -2r)$-tuple
 $ (i_s, j_s)_{ 2r+1 \leq s \leq m } $
 from $
 \mathcal{A}_{\overrightarrow{\sigma}\prime, \varepsilon^\prime, M}^{ ( p^\prime, q^\prime )} $ uniquely determines  $ \alpha_1 $
 and
 $ \beta_1 $  
 (via
 $ i_{ 2r +1 } $
 ).  Hence we have that
 
 \begin{align*}
 | \mathcal{A}_{\overrightarrow{\sigma}, \varepsilon, M}^{ (p, q)}| = &
 |\{ ( \alpha_s, \beta_s)_{ 2\leq s \leq m } : \ \alpha_s, \beta_s \textrm{ satisfy conditions (\ref{cond:0:1})}\}  | 
 \cdot
 | \mathcal{A}_{\overrightarrow{\sigma}^\prime, \varepsilon^\prime, M}^{ ( p^\prime, q^\prime )}|\\
 = &
  b^{ 2r -1} d \cdot
 | \mathcal{A}_{\overrightarrow{\sigma}^\prime, \varepsilon^\prime, M}^{ ( p^\prime, q^\prime )}|.
 \end{align*}
 
 Remember that
 $ \pi = [ 2 r ] \oplus \pi^{ \prime \prime } $, 
 so
 $ | \pi | = 1 + | \pi ^{ \prime \prime } | $.
In particular the first formula from Notation \ref{not:21} becomes
     \begin{align}\label{eq:c:p:q}
     w_1(\pi) = (-1)^{r -1} \mathit{Cat}_{ r -1} 
     \cdot
     w_1( \pi^\prime ). 
     \end{align}
So, using (\ref{eq:c:p:q}) and (\ref{eq:v:w:n}), we get
   \begin{eqnarray*}
 	\lefteqn{\mathcal{V}_{
 		\overrightarrow{\sigma},\varepsilon, M}
 	(p, q) }\\
 	&=&  
 	\Wg_{ M} (p, q) \cdot \frac{1}{M}    | \mathcal{A}_{\overrightarrow{\sigma}, \varepsilon, M}^{ (p, q)}| \\
 	& = &
 	( M )^{ - m + | \pi | } \cdot
 	w_1( \pi )  \frac{1}{M}   | \mathcal{A}_{\overrightarrow{\sigma}, \varepsilon, M}^{ (p, q)}|
 	+ O ( M^{ -2} ) \\
 	& = &
 	(-1)^{ r -1} \mathit{Cat}_{ r -1} 
 	\cdot
 	w_1( \pi^\prime) \cdot 
 	( M)^{ - 2r + 1 } \cdot
 	( M )^{ - ( m - 2r ) +
 		|  \pi^\prime | } 
 	\frac{1}{M} \\
 	&& \mbox{} \times d b^{ 2r -1} 
 	| \mathcal{A}_{\overrightarrow{\sigma}^\prime, \varepsilon^\prime, M }^{ ( p^\prime, q^\prime ) }  | 
 	+ O( M^{ -2} )\\
 	& = &
 	\big[ d^{ 2- 2r} \cdot
 	(-1)^{ r -1} 
 	\mathit{Cat}_{ r -1} 
 	\big]
 	\cdot
 	\big[ w_1 ( \pi^\prime)
 	( M )^{ - ( m - 2r ) 
 		+ | \pi^\prime | } \\
 	&& \mbox{} \times \frac{1}{M}
 	| \mathcal{A}_{\overrightarrow{\sigma}^\prime, \varepsilon^\prime, M }^{
 		( p^\prime, q^\prime )
 	}  |
 	\big] + O(M^{ -2 } )\\
 	& = &
 	\big[ d^{ 2- 2r } 
 	\cdot
 	(-1)^{ r -1} 
 	\mathit{Cat}_{ r -1} 
 	\big]
 	\cdot
 	\mathcal{V}_{ \overrightarrow{\sigma}^\prime, \varepsilon^\prime, M} ( p^\prime, q^\prime )
 	+ O(M^{ -2 } ).
 \end{eqnarray*}   
 
 In the case
  $ ( p^\prime, q_{ | [ 2 r ]}) = ( \widetilde{ q_r },  \widetilde{ p_r } ) $,
  the conditions (\ref{cond:0:2}) give that 
  \[
 \mathcal{F}_{\overrightarrow{\sigma}, \varepsilon, M}^{ ( p, q )} ( [ 2 r ])
 \ab\leq 
 \frac{M^{2r}}{b^r} 
 \mbox{\ and\ }
| \mathcal{A}_{\overrightarrow{\sigma}, \varepsilon, M}^{ (p, q)}| =
   b d^{2r-1}  \cdot 
 | \mathcal{A}_{\overrightarrow{\sigma}^\prime, \varepsilon^\prime, M}^{ ( p^\prime, q^\prime )}|
 \]  
 therefore, using again (\ref{eq:v:w:n}) and (\ref{eq:c:p:q}) we obtain that  
  \begin{eqnarray*}\lefteqn{
  \mathcal{V}_{ \overrightarrow{\sigma}, \varepsilon, M} (p, q)}\\
   && \mbox{} =
  \begin{cases}
  o(M^0) & \textrm{ if } b \rightarrow \infty \\
  \big[ b^{ 2- 2r } 
  \cdot
  (-1)^{ r -1} 
  \mathit{Cat}_{ r -1} 
  \big]
  \cdot
  \mathcal{V}_{ \overrightarrow{\sigma}^\prime, \varepsilon^\prime, M} ( p^\prime, q^\prime ) &  \\
 \hskip 6em\mbox{} + O(M^{ -2 } ) & \textrm{ if  $ b $ is bounded. }
  \end{cases}
  \end{eqnarray*}

 The case 
 $ \varepsilon_1 = \ast $
  is similar, interchanging the symbols
   $ \widetilde{ p_r} $
   and 
   $ \widetilde{ q_r } $.  
\end{proof}   

We conclude this section with some immediate consequences of Theorem \ref{thm:24}. First, taking
 $ n =2 $, 
  $ \vartheta_1 = 1 $, $ \vartheta_2 = -1 $
  and
  $  b_{1, N} = b_{2, N} $, Lemma \ref{lemma:gamma} and Theorem \ref{thm:25} give the following.
  
  \begin{corollary}\label{cor:26}
  	For any 
  	$ \vartheta, b, d $,
  	we have that
  	$ U_M^{ \Gamma_{b, d}^{(\vartheta)}}$
  	is asymptotically free from its transpose.
  \end{corollary}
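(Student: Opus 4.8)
The plan is to reduce the statement to a single instance of Theorem \ref{thm:25}, the only genuine content being the recognition that the full matrix transpose of a partial transpose is again a partial transpose. Recall that for an entry permutation $\sigma$ one has $(U_M^{\sigma})^t = U_M^{\sigma\circ\tau}$, where $\tau$ is the full coordinate swap $(i,j)\mapsto(j,i)$; in the block coordinates of the opening Notation this is $\tau=\phi_{b,d}^{-1}\circ(\gamma_{b,d}^{(1)}\gamma_{b,d}^{(-1)})\circ\phi_{b,d}$, since interchanging $i$ and $j$ amounts to swapping both the outer indices $a_1\leftrightarrow a_{-1}$ and the inner indices $a_2\leftrightarrow a_{-2}$. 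Because $\gamma_{b,d}^{(1)}$ (which swaps $a_2,a_{-2}$) and $\gamma_{b,d}^{(-1)}$ (which swaps $a_1,a_{-1}$) are commuting involutions acting on disjoint coordinates, we have $\gamma_{b,d}^{(\vartheta)}\circ(\gamma_{b,d}^{(1)}\gamma_{b,d}^{(-1)})=\gamma_{b,d}^{(-\vartheta)}$, and therefore
\[
\big(U_M^{\Gamma_{b,d}^{(\vartheta)}}\big)^t = U_M^{\Gamma_{b,d}^{(-\vartheta)}}.
\]
This is exactly the identity $\Gamma_{b,d}^{(-1)}=(\Gamma_{b,d}^{(1)})^t$ already used in the proof of Theorem \ref{thm:25}; I would simply record it for general $\vartheta$.

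With this identification the corollary asserts precisely that the pair $U_M^{\Gamma_{b,d}^{(\vartheta)}}$, $U_M^{\Gamma_{b,d}^{(-\vartheta)}}$ is asymptotically free, which is the case $n=2$ of Theorem \ref{thm:25} with $\vartheta_1=\vartheta$, $\vartheta_2=-\vartheta$ and $(b_{1,N},d_{1,N})=(b_{2,N},d_{2,N})=(b_N,d_N)$ (the equality $b_{1,N}=b_{2,N}$ forcing $d_{1,N}=d_{2,N}$ through $b_{i,N}d_{i,N}=M_N$). It then remains only to check the hypothesis \eqref{condi:19} for these two maps. Since $\vartheta\neq-\vartheta$, I would invoke Lemma \ref{lemma:gamma}$(ii)$: in the opposite-sign case, \eqref{condi:19} is equivalent to $\lim_N b_N d_N'=\lim_N b_N' d_N=\infty$. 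Here both products equal $b_N d_N=M_N$, so the condition collapses to $\lim_N M_N=\infty$, which holds by the standing assumption. Hence \eqref{condi:19} is satisfied and Theorem \ref{thm:25} yields the asymptotic freeness of $U_M^{\Gamma_{b,d}^{(\vartheta)}}$ from $U_M^{\Gamma_{b,d}^{(-\vartheta)}}=\big(U_M^{\Gamma_{b,d}^{(\vartheta)}}\big)^t$.

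I expect no real obstacle: the substantive work lives entirely in Theorem \ref{thm:25}, and for the present pair the Weingarten-level hypothesis \eqref{condi:19} degenerates to the trivial requirement $M_N\to\infty$. The only point demanding care is the bookkeeping of composition order in the transpose identity $(U_M^{\sigma})^t=U_M^{\sigma\circ\tau}$ together with the verification that composing $\gamma_{b,d}^{(\vartheta)}$ with the full swap lands on $\gamma_{b,d}^{(-\vartheta)}$; both are immediate from the definitions of $\phi_{b,d}$ and $\gamma_{b,d}^{(\vartheta)}$.
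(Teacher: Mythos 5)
Your proposal is correct and follows exactly the paper's route: the paper likewise obtains the corollary by noting $\big(U_M^{\Gamma_{b,d}^{(\vartheta)}}\big)^t=U_M^{\Gamma_{b,d}^{(-\vartheta)}}$, taking $n=2$ with opposite signs $\vartheta_1=1$, $\vartheta_2=-1$ and equal block sizes in Theorem \ref{thm:25}, and verifying condition \eqref{condi:19} via Lemma \ref{lemma:gamma}$(ii)$, where $b_Nd_N'=b_N'd_N=M_N\to\infty$ trivially. Your explicit check of the transpose identity at the level of $\gamma_{b,d}^{(\vartheta)}$ is a correct elaboration of what the paper leaves implicit.
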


   In \cite{mp2} it is proved that unitarily invariant random matrices are asymptotically free from their transposes. An example of random matrices which are not unitarily invariant but are free from their transposes is given by Wigner ensembles (see \cite{m}). Henceforth, Corollary \ref{cor:26}, together with Remark \ref{non-unit-inv} gives a new non-trivial class of non-unitarily invariant random matrices which are free from their transposes.
  
 Furthermore, if in Theorem \ref{thm:25} we take
  $ n = 4 $,  $ b_{1, N} = b_{2, N} = 1 $, $ \vartheta_1 = \vartheta_3 = 1 $,
  $ \vartheta_2 = \vartheta_4 = -1 $
  and $ b_{3, N} = b_{4, N} $ such that 
  $ b_{ 3, N}, d_{3, N} \rightarrow \infty $,
  another application of Lemma \ref{lemma:gamma} gives the result below.

\begin{corollary}\label{cor:27}
If 
$ b \rightarrow \infty $
and
$ d \rightarrow \infty$
then
$ U_M,  U_M^{ \Gamma_{b, d}^{(\vartheta)}}$
and their transposes
form an asymptotically free family.
\end{corollary}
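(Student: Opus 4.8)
The plan is to realize all four members of the family as partial transposes $U_M^{\Gamma_{b_s,d_s}^{(\vartheta_s)}}$ and then to invoke Theorem \ref{thm:25}, which reduces asymptotic freeness to verifying condition (\ref{condi:19}) for each of the $\binom{4}{2}=6$ pairs. First I would identify the permutations. Taking $b_1=b_2=1$ forces $d_1=d_2=M$, and unwinding $\gamma_{1,M}^{(\vartheta)}$ shows that $\Gamma_{1,M}^{(1)}$ is the full transpose while $\Gamma_{1,M}^{(-1)}$ is the identity; hence $U_M^{\Gamma_{1,M}^{(1)}}=U_M^t$ and $U_M^{\Gamma_{1,M}^{(-1)}}=U_M$. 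Taking $b_3=b_4=b$ (so $d_3=d_4=d$) with $\vartheta_3=1$ and $\vartheta_4=-1$ gives $U_M^{\Gamma_{b,d}^{(1)}}$ and $U_M^{\Gamma_{b,d}^{(-1)}}=(U_M^{\Gamma_{b,d}^{(1)}})^t$, using the identity $\Gamma_{b,d}^{(-1)}=(\Gamma_{b,d}^{(1)})^t$ recorded in the proof of Theorem \ref{thm:25}. After relabelling in the case $\vartheta=-1$, the family $\{U_M,\,U_M^{\Gamma_{b,d}^{(\vartheta)}},\,U_M^t,\,(U_M^{\Gamma_{b,d}^{(\vartheta)}})^t\}$ is therefore precisely $\{U_M^{\tau_1},\dots,U_M^{\tau_4}\}$.

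Next I would check (\ref{condi:19}) for all six pairs, splitting according to whether $\vartheta_s=\vartheta_t$. The equal-$\vartheta$ pairs are $(1,3)$ and $(2,4)$; here Lemma \ref{lemma:gamma}$(i)$ reduces (\ref{condi:19}) to $\mathrm{l.c.m.}(b_s,b_t)/\min(b_s,b_t)\to\infty$ and $\mathrm{l.c.m.}(d_s,d_t)/\min(d_s,d_t)\to\infty$. In both pairs the block sizes are $1$ and $b$, so the first ratio equals $b$, while the $d$-values are $M$ and $d$ with $d\mid M$, so the second ratio is $M/d=b$; both tend to infinity. The unequal-$\vartheta$ pairs are $(1,2),(1,4),(2,3),(3,4)$; for these Lemma \ref{lemma:gamma}$(ii)$ reduces (\ref{condi:19}) to $b_sd_t\to\infty$ and $b_td_s\to\infty$. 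The eight relevant products take the values $M$ (for $(1,2)$), $d$ and $bM$ (for $(1,4)$ and $(2,3)$), and $bd=M$ (for $(3,4)$), each of which tends to infinity once $b\to\infty$ and $d\to\infty$.

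With all six instances of (\ref{condi:19}) verified, Theorem \ref{thm:25} gives the asymptotic freeness of the four-element family at once. I do not anticipate a real obstacle: all the analytic content sits in Theorem \ref{thm:25} and Lemma \ref{lemma:gamma}, and the only point needing care is the bookkeeping that simultaneously forces $b\to\infty$ (used for the equal-$\vartheta$ pairs and for products like $bM$) and $d\to\infty$ (used for the products $b_1d_4=d$ and $b_2d_3=d$) — which is exactly why the hypothesis requires both limits.
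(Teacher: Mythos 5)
Your proposal is correct and follows exactly the paper's route: the paper likewise applies Theorem \ref{thm:25} with $n=4$, $b_{1,N}=b_{2,N}=1$, $\vartheta_1=\vartheta_3=1$, $\vartheta_2=\vartheta_4=-1$, $b_{3,N}=b_{4,N}=b$, and invokes Lemma \ref{lemma:gamma} to verify condition (\ref{condi:19}) for the pairs. You have merely written out the pairwise checks that the paper leaves implicit, and they are all accurate.
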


\section*{acknowledgments}
We are grateful to Adam Skalski for bringing \cite{cu} to our attention.

\thebottomline

\end{document}